\newtheorem{proposition}{Proposition}
\newtheorem{lemma}{Lemma}
\newtheorem{conjecture}{Conjecture}
\date{}
\author{Antonio Bernini
\thanks{Dipartimento di Matematica e Informatica ``U. Dini'',
University of Firenze, Firenze, Italy. {
\tt\  antonio.bernini@unifi.it, luca.ferrari@unifi.it}}
\and Matteo Cervetti
\thanks{Dipartimento di Matematica, University of Trento, Trento, Italy.
{\tt \ matteo.cervetti@unitn.it}}
\and Luca Ferrari $^*$
\and Einar Steingr\'imsson $^{*,\!\!}$
\thanks{Department of Computer and Information Sciences,
University of Strathclyde, Glasgow, Scotland, UK. {\tt \  einar@alum.mit.edu}  }}
\title{Enumerative combinatorics of intervals in the Dyck pattern poset}
\begin{document}

\maketitle

\begin{abstract}
We initiate the study of the enumerative combinatorics of the intervals in the Dyck pattern poset.
More specifically, we find some closed formulas to express the size of some specific intervals, as well as the number of their covering relations.
In most of the cases, we are also able to refine our formulas by rank.
We also provide the first results on the M\"obius function of the Dyck pattern poset,
giving for instance a closed expression for the M\"obius function of initial intervals whose maximum is a Dyck path having exactly two peaks.
\end{abstract}


\section{Introduction}

\noindent
The \emph{Dyck pattern poset} was first introduced in \cite{BFPW} and further studied in \cite{BBFGPW}.
A \emph{Dyck path} is a lattice path starting from the origin of a fixed Cartesian coordinate system,
ending on the $x$-axis, never falling below the $x$-axis and using only two types of steps,
namely up steps $U=(1,1)$ and down steps $D=(1,-1)$.
The sequence of up and down steps of a Dyck path is a word on the alphabet $\{ U,D\}$ such that
each prefix has at least as many $U$'s as $D$'s and the total number of $U$'s and $D$'s is the same.
Such words are commonly called \emph{Dyck words}.
The total number of letters/steps of a Dyck word/path is called the \emph{length} of the word/path.
In the following we will frequently switch between paths and words,
and in particular we will use the same notations when no confusion is likely to arise.
Given two Dyck paths $P,Q$, we say that $P\leq Q$ when $P$ is a subword of $Q$
(i.e. there exists a subsequence of the letters of $Q$ which, read from left to right, are equal to $P$).
In this case, we also say that $P$ is a \emph{pattern} of $Q$,
and any subword of $Q$ which is equal to $P$ is called an \emph{occurrence} of $P$ in $Q$.
So, for instance, $UUDD\leq UDUDUD$, whereas $UUDDUD$ and $UUDUUUDDDD$ are incomparable.
The Dyck pattern poset has a minimum, which is the path $UD$, and has no maximum;
moreover, it is graded, the rank of an element being its semilength.

In the above mentioned papers some enumerative properties of the Dyck pattern poset have been investigated,
mainly focusing on pattern avoidance questions.
Here we start the analysis of the enumerative combinatorics of the intervals of this poset.

\bigskip

Given any poset, one of the most natural aspects to investigate is the structure of its intervals.
This has been done in several combinatorially interesting posets, such as for Tamari lattices \cite{CCP,F},
the Bruhat order \cite{T}, the consecutive pattern poset \cite{EM}, to cite just a few.
From this point of view, a fairly general problem is that of counting (saturated) chains
(here ``saturated" means that the chain cannot be extended except at the beginning and at the end).
Special instances of this problem are the enumeration of the elements and of the covering relations of the interval
(which are saturated chains of length 0 and 1, respectively).
Another important quantity associated to a (combinatorially interesting) poset is
the M\"obius function of its intervals.
For our purposes, we can define the M\"obius function
$\mu:\mathcal{P}^2 \rightarrow \mathbb{Z}$ of the poset $\mathcal{P}$ in the following recursive way
(for $x\leq y$):

$$
\left\{ \begin{array}{ll}
\mu (x,x)=1,\\
\mu (x,y)=-\sum_{x\leq z<y}\mu (x,z), \qquad \textnormal{when $x<y$.}
\end{array}
\right. .
$$

In the present paper we analyze a few types of \emph{initial intervals} $[UD,P]$ in the Dyck pattern poset.
More specifically, we first consider the case in which $P=(UD)^n$, for some $n\in \mathbb{N}$,
for which we are able to determine the cardinality of the interval, also refined by rank.
Then we examine in detail the case in which $P$ has exactly two peaks
(a peak of a Dyck path $P$ is an occurrence of the Dyck path $UD$ as a consecutive pattern in $P$):
here we find formulas both for the cardinality of the interval
(and also in this case we have a refined version for ranks) and for the number of covering relations.
We find also a nice bijection between
Dyck paths inside the interval $[UD,P]$ having two peaks and squares fitting inside a rectangle of appropriate dimensions.
Finally, we give also a complete description of the M\"obius function of such intervals.
We remark that
the computation of the M\"obius function of the Dyck pattern poset is still open for general intervals,
and the results contained in the present paper are the first ones for this poset.
In the last section, together with some proposals for further work,
we also provide some additional results and conjectures on the M\"obius function which suggest that
the Dyck pattern intervals have a nice structure that certainly deserves to be better investigated.

\bigskip

We close this Introduction by fixing the main notations we are using throughout the paper.

Given a poset $\mathcal{P}$ and a nonnegative integer $\ell$,
a $\textit{saturated chain of length}$ $\ell$ in $\mathcal{P}$ is
a sequence $(x_{0},x_{1},...,x_{\ell})$ of $\ell +1$ elements of $\mathcal{P}$ such that
$x_{0}\prec x_{1}\prec ...\prec x_{\ell}$, where $\prec$ denotes the covering relation of $\mathcal{P}$.
For a finite poset $\mathcal{P}$,
denote with $s_{\ell}(\mathcal{P})$ the number of saturated chains of length $\ell$ in $\mathcal{P}$.
In particular, $s_{0}(\mathcal{P})$ is the number of elements of $\mathcal{P}$,
and $s_{1}(\mathcal{P})$ is the number of edges of the Hasse diagram of $\mathcal{P}$ (which is also the number of coverings relations in $\mathcal P$).
When $\mathcal{P}$ is graded and $\ell ,k\in \mathbb{N}$,
the number of saturated chains of length $\ell$ whose top element has rank $k$ will be denoted $s_{\ell}^{(k)}(\mathcal{P})$.
Therefore $s_{\ell}(\mathcal{P})=\sum_{k\geq \ell}s_{\ell}^{(k)}(\mathcal{P})$.
In particular, $s_0 ^{(k)}(\mathcal{P})$ is the number of elements of $\mathcal{P}$ having rank $k$.

Given $x\in \mathcal{P}$, we write $\Delta(x)$ for the number of elements in $\mathcal{P}$ covered by $x$.
Moreover, $\Delta_{t}(\mathcal{P})$ will denote the number of $x\in \mathcal{P}$ such that $\Delta(x)=t$.
As a consequence, we have that $s_{1}(\mathcal{P})=\sum_{t\geq 0}t\cdot \Delta_{t}(\mathcal{P})$.

\section{The interval $[UD,(UD)^{n}]$}

\subsection{Size of the interval}

Our first goal is to find an explicit formula for the number of elements of the interval $[UD,(UD)^{n}]$,
for $n\in \mathbb{N}$.
Recall that, given $n,k\in \mathbb{N}$, the $\textit{Narayana number}$ $N_{n,k}$ is defined as
the number of Dyck paths of semilength $n$ having $k$ peaks.
It is well known that $N_{0,0}=1$, $N_{n,k}=\frac{1}{n}\binom{n}{k}\binom{n}{k-1}$ for $n,k\geq 1$
and $N_{n,k}=0$ in the remaining cases.
%

Suppose now $P$ is a Dyck path and denote with $\textsf{asc}(P)$ the number of ascents of $P$,
where an \emph{ascent} of a Dyck path is a maximal consecutive substrings of $P$ of the form $U^{m}$,
for some $m>0$.
It is clear that $\textsf{asc}(P)$ also counts the number of peaks of $P$,
in particular $N_{n,k}$ also counts the number of Dyck paths of semilength $n$ with $k$ ascents.
The next lemma characterizes Dyck paths in the interval $[UD,(UD)^{n}]$ in terms of the number of ascents.

\begin{lemma} Let $n>0$, $k\in \{1,...,n\}$ and $P$ be a Dyck path of semilength $k$.
Then $P\leq (UD)^{n}$ if and only if $\textsf{asc}(P)\geq 2k-n$.
\end{lemma}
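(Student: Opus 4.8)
The plan is to analyze how a Dyck path $P$ of semilength $k$ can occur as a subword of $(UD)^n$ by grouping the $2n$ letters of $(UD)^n$ into $n$ consecutive \emph{blocks}, the $i$-th block being the pair $UD$ occupying positions $2i-1$ and $2i$. An occurrence of $P$ then selects from each block exactly one of four options: nothing, the $U$ only, the $D$ only, or the whole $UD$. Let $r$ be the number of blocks contributing their full $UD$, $p$ the number contributing only $U$, and $q$ the number contributing only $D$. Counting $U$'s and $D$'s in the occurrence gives $p+r=q+r=k$, hence $p=q=k-r$, and the number of non-empty blocks equals $p+q+r=2k-r$. Since only $n$ blocks are available, any occurrence forces $2k-r\leq n$.

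For the forward implication I would bound $r$ by the number of ascents. Write $P=U^{a_1}D^{b_1}\cdots U^{a_t}D^{b_t}$ with $t=\textsf{asc}(P)$ and $\sum a_i=\sum b_i=k$. If a block contributes its full $UD$, its two letters occupy adjacent positions with nothing selected between them, so in the occurrence they become two consecutive letters, producing a consecutive factor $UD$ of $P$, that is, a peak. Moreover distinct full blocks produce distinct peaks: the positions chosen from different blocks are disjoint and appear in $P$ in the same left-to-right order as the blocks in $(UD)^n$, so the two letters of a later full block lie strictly to the right of both letters of an earlier one. Since $P$ has exactly $t$ peaks, this shows $r\leq t$, and combined with $2k-r\leq n$ we get $\textsf{asc}(P)=t\geq 2k-n$.

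For the converse, assume $\textsf{asc}(P)=t\geq 2k-n$ and I would exhibit an explicit occurrence. Rewrite $P$ as the concatenation, over $i=1,\dots,t$, of $a_i-1$ letters $U$, then a factor $UD$, then $b_i-1$ letters $D$. Reading the blocks of $(UD)^n$ from left to right, assign those $a_i-1$ letters $U$ to $a_i-1$ distinct $U$-only blocks, the factor $UD$ to one full block, and those $b_i-1$ letters $D$ to $b_i-1$ distinct $D$-only blocks. The total number of blocks used is $\sum_i(a_i-1)+\sum_i 1+\sum_i(b_i-1)=2k-t\leq n$, so there are enough blocks, the remaining ones being left empty. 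As there are no compatibility constraints between blocks, this describes a genuine subword of $(UD)^n$ equal to $P$, hence $P\leq (UD)^n$.

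The only delicate point is the inequality $r\leq t$: one must argue carefully that the full-$UD$ blocks correspond to pairwise distinct peaks of $P$, which is exactly the disjointness-and-ordering observation above. Everything else is bookkeeping with the relations $p=q=k-r$ and the block count $2k-r$, together with the already noted fact that $\textsf{asc}(P)$ counts the peaks of $P$.
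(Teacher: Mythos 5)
Your proof is correct and takes essentially the same route as the paper, which establishes a slight generalization (its Lemma~2, for arbitrary strings $U^{\alpha_1}D^{\beta_1}\cdots U^{\alpha_m}D^{\beta_m}$) by the same factor-counting: each factor $UD$ of $(UD)^n$ contributes at most one letter to an occurrence unless it realizes a peak, and the converse embedding via $U^{a_i-1}(UD)D^{b_i-1}$ is identical to yours. Your explicit bookkeeping with $p=q=k-r$ and the bound $r\le t$ via disjoint full blocks is simply a more carefully spelled-out version of the paper's forward direction.
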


In order to prove this lemma,
it is convenient to regard it as a special case of the following slight generalization.

\begin{lemma} For any positive integers $n,m,\alpha_1 ,\ldots ,\alpha_m ,\beta_1 ,\ldots ,\beta_m$,
set $\alpha=\sum_{i=1}^{m}\alpha_i$ and $\beta=\sum_{i=1}^{m}\beta_i$.
Then the string $U^{\alpha_{1}}D^{\beta_{1}}U^{\alpha_{2}}D^{\beta_{2}}\cdots U^{\alpha_{m}}D^{\beta_{m}}$
is a substring of $(UD)^{n}$ if and only if $\alpha +\beta -n\leq m\leq n$.
\end{lemma}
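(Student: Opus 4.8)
The plan is to prove the equivalence by analyzing how a candidate substring $W = U^{\alpha_1}D^{\beta_1}\cdots U^{\alpha_m}D^{\beta_m}$ can be embedded into $(UD)^n$, matching letters greedily from left to right. Think of $(UD)^n$ as $n$ consecutive "cells", each cell being one $UD$. An occurrence of $W$ selects, for each letter of $W$, a position in $(UD)^n$, respecting order and with all chosen positions distinct. The key structural observation is that within a single cell $UD$ we can pick at most one $U$ (the first slot) and at most one $D$ (the second slot), and if we pick both then the $U$ must be matched before the $D$. So reading $W$ from left to right, every time we pass from a maximal block $U^{\alpha_i}$ to the following block $D^{\beta_i}$, or from $D^{\beta_i}$ to $U^{\alpha_{i+1}}$, we are forced to move to a strictly later cell, except that a $U\!\to\!D$ transition may stay inside the current cell (using both its slots). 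This is the mechanism that both consumes cells and imposes the constraints.

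First I would prove necessity. Since $W$ has length $\alpha+\beta$ and $(UD)^n$ has length $2n$, trivially $\alpha+\beta\le 2n$; but more is needed. Each letter of $W$ occupies a distinct slot of some cell, and each cell has only two slots, so $m\le n$ is not yet forced this way — rather, $m\le n$ follows because the $m$ blocks $U^{\alpha_i}$ are pairwise separated by at least one $D$ in $W$, and… actually the clean argument is: the leading $U$ of the $i$-th ascent $U^{\alpha_i}$ must land in the $U$-slot of some cell, and these $m$ cells are distinct and ordered, so $m\le n$. For the inequality $\alpha+\beta-n\le m$, I would count cell-transitions: process $W$ left to right and let $c$ be the index of the current cell; each letter either reuses the current cell or advances $c$. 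A cell can be reused by at most one subsequent letter (passing from its $U$-slot to its $D$-slot), and such reuse happens only at a $U\!\to\!D$ junction, i.e. at the boundary between some $U^{\alpha_i}$ and $D^{\beta_i}$; there are exactly $m$ such junctions. Hence among the $\alpha+\beta$ letters, at most $m$ of them reuse a cell, so at least $\alpha+\beta-m$ of them force a new cell; since we use at most $n$ cells we get $\alpha+\beta-m\le n$, i.e. $\alpha+\beta-n\le m$.

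Next, sufficiency: assuming $\alpha+\beta-n\le m\le n$, construct an embedding explicitly. The natural construction is to place each ascent $U^{\alpha_i}$ into consecutive cells using their $U$-slots, then continue placing the descent $D^{\beta_i}$ using the $D$-slot of the last cell reached by $U^{\alpha_i}$ (if $\beta_i\ge 1$) and then the $D$-slots of the following cells, then start $U^{\alpha_{i+1}}$ in the next fresh cell's $U$-slot, and so on. One checks that block $i$ together with the "overlap" at its $U\!\to\!D$ junction consumes $\alpha_i + \beta_i - 1$ cells (the $-1$ accounting for the one shared cell, when both $\alpha_i,\beta_i\ge1$, which holds since all parameters are positive), plus we need the junctions between $D^{\beta_i}$ and $U^{\alpha_{i+1}}$ not to overlap — giving a total cell count of $\sum_{i=1}^m(\alpha_i+\beta_i-1) = \alpha+\beta-m$ cells. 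The hypothesis $\alpha+\beta-m\le n$ guarantees these fit inside $(UD)^n$, and $m\le n$ guarantees... actually $m\le n$ is subsumed once all $\alpha_i,\beta_i\ge 1$, since then $\alpha+\beta-m\ge m$, so $m\le \alpha+\beta-m\le n$; I would double-check whether $m\le n$ is redundant or whether it is needed as a separate hypothesis and mention it.

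The main obstacle I anticipate is making the "cell-reuse" bookkeeping rigorous rather than hand-wavy: precisely, arguing that in *any* occurrence (not just the greedy one) a cell is reused by at most one later letter, and that such reuse can only occur at a $U\!\to\!D$ block junction of $W$. I would handle this by a clean formalization: label the chosen slots in $(UD)^n$ by pairs $(c,s)$ with $c\in\{1,\dots,n\}$ the cell and $s\in\{1,2\}$ the slot ($s=1$ the $U$, $s=2$ the $D$), note the matching is strictly increasing in lexicographic order on these pairs, and observe that two consecutive letters of $W$ get the same cell $c$ exactly when the first gets $(c,1)$ and the second $(c,2)$, which forces the first letter to be a $U$ and the second to be a $D$ — i.e. a $U\!\to\!D$ junction. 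Three or more letters cannot share a cell since a cell has only two slots. Counting the $U\!\to\!D$ junctions as exactly $m$ then closes the argument. Once this lemma is in place, the original lemma follows by taking $P$ of semilength $k$ with ascents $\alpha_1,\dots,\alpha_m$ (so $m=\mathsf{asc}(P)$) and descents $\beta_1,\dots,\beta_m$ with $\alpha=\beta=k$, whence the condition $\alpha+\beta-n\le m\le n$ becomes $2k-n\le \mathsf{asc}(P)\le n$, and the upper bound $\mathsf{asc}(P)\le k\le n$ is automatic.
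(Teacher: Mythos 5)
Your proposal is correct and follows essentially the same approach as the paper's proof: counting how many $UD$ factors (cells) of $(UD)^n$ are consumed, with the key observation that only a $U\to D$ block junction (a peak) can share a single factor, so any occurrence needs at least $\alpha+\beta-m$ factors, and the greedy construction achieves exactly that many. Your slot-labelling formalization just makes rigorous what the paper asserts as ``not hard to see,'' and your remark that $m\le n$ is subsumed by $\alpha+\beta-m\le n$ when all parts are positive is a correct small addition.
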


\begin{proof}
Set $P=U^{\alpha_{1}}D^{\beta_{1}}U^{\alpha_{2}}D^{\beta_{2}}...U^{\alpha_{m}}D^{\beta_{m}}$.

Suppose first that $P$ is a substring of $(UD)^n$.
Then, for each step of $P$ not belonging to a peak,
we need one factor $UD$ from $(UD)^n$ in order to embed $P$ into $(UD)^n$.
Moreover, each peak of $P$ needs just one factor from $(UD)^n$.
Thus the total number of factors $UD$ of $(UD)^n$ must be at least the sum of the two above quantities,
that is
$$
n\geq \left( \sum_{i=1}^{m}(\alpha_i -1)+\sum_{i=1}^{m}(\beta_i -1)\right) +m=\alpha +\beta -m,
$$
which implies the desired inequality.



Suppose now that $n\geq \alpha+\beta-m$. We look for an occurrence of $P$ in $(UD)^n$.
It is not hard to see that each step of $P$ not belonging to a peak, as well as each peak of $P$,
requires precisely one factor $UD$ from $(UD)^n$.
Thus, for any $i$, in order to embed $U^{\alpha_i}D^{\beta_i}=U^{\alpha_i -1}(UD)D^{\beta_i-1}$ into $(UD)^n$
we need $(\alpha_i -1)+1+(\beta_i -1)=\alpha_i +\beta_i -1$ factors $UD$.
Therefore, we can embed $P$ into $(UD)^n$ provided that $n$ is at least
$\sum_{i=1}^{m}(\alpha_i +\beta_i -1)=\alpha +\beta -m$, which is the hypothesis.

%

\end{proof}

Now as an immediate consequence of this lemma we can deduce an explicit formula for $s_0^{(k)}([UD,(UD)^{n}])$ and $s_0([UD,(UD)^{n}])$ when $n\in \mathbb{N}$ and $k\in \{1,...,n\}$.

\begin{proposition} Let $n>0$ and $k\in \{1,...,n\}$, then
\begin{itemize}
\item[(i)]$$s_0^{(k)}([UD,(UD)^{n}])=\sum_{m=\max\{1,2k-n\}}^{k}N_{k,m}$$
\item[(ii)] \begin{equation}\label{UD}
 s_{0}([UD,(UD)^{n}])=\sum_{k=1}^{n}\sum_{m=\max\{1,2k-n\}}^{k}N_{k,m}.
 \end{equation}
\end{itemize}
\end{proposition}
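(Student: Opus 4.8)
The plan is to deduce both parts directly from the first Lemma, which already carries out all the combinatorial work. For part~(i), I would observe that an element of $[UD,(UD)^{n}]$ of rank $k$ is exactly a Dyck path $P$ of semilength $k$ with $P\le (UD)^{n}$, and that by the first Lemma this holds if and only if $\textsf{asc}(P)\ge 2k-n$. Since $\textsf{asc}(P)$ counts the peaks of $P$ and every Dyck path of positive semilength has at least one peak, this constraint is equivalent to $\textsf{asc}(P)\ge\max\{1,2k-n\}$; on the other hand a Dyck path of semilength $k$ has at most $k$ peaks. Grouping the admissible paths according to their number $m$ of peaks, and recalling that there are exactly $N_{k,m}$ Dyck paths of semilength $k$ with $m$ peaks, I would sum $N_{k,m}$ over $m$ from $\max\{1,2k-n\}$ to $k$, which gives the formula in~(i).

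For part~(ii), I would simply sum the expression in~(i) over all admissible ranks. The interval $[UD,(UD)^{n}]$ contains its minimum $UD$, of rank $1$, and its maximum $(UD)^{n}$, of rank $n$; since the Dyck pattern poset is graded by semilength, every element of the interval has rank between $1$ and $n$, so $s_{0}([UD,(UD)^{n}])=\sum_{k=1}^{n}s_{0}^{(k)}([UD,(UD)^{n}])$, and substituting the formula from~(i) yields~(ii).

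There is essentially no serious obstacle here: the statement is an immediate corollary of the first Lemma. The only points deserving a word of care are the boundary adjustments in the summation index --- replacing the raw bound $2k-n$, which may be nonpositive, by $\max\{1,2k-n\}$ because of the unavoidable peak, and capping $m$ at $k$ because a path of semilength $k$ cannot have more than $k$ peaks --- together with the observation that $N_{k,m}=0$ outside this range, so that the formula remains correct even if one extended the summation range slightly. One might also record that $\max\{1,2k-n\}\le k$ for every $k\in\{1,\dots,n\}$ (indeed $2k-n\le k$ is equivalent to $k\le n$), so the inner sum is never empty.
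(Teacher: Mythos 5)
Your proposal is correct and matches the paper's intended argument: the paper derives the proposition as an immediate consequence of the lemma characterizing membership in $[UD,(UD)^{n}]$ by the condition $\textsf{asc}(P)\geq 2k-n$, grouping paths of semilength $k$ by their number of peaks and counting with Narayana numbers, exactly as you do. Your extra remarks on the boundary adjustments $\max\{1,2k-n\}$ and the cap at $k$ are sound and fill in details the paper leaves implicit.
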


\bigskip

\noindent
In Figure \ref{fig1} the Hasse diagram of the interval
$[UD,(UD)^5]$ is depicted.

\begin{figure}[h]
	\begin{center}
		{\begin{tikzpicture}
			\node (1) at (0,0)
			{\scalebox{0.2}
				{\begin{tikzpicture}
					\draw  (0,0)  --  (1,1) -- (2,0);
					\draw[fill] (0,0) circle[radius=0.1];
					\draw[fill] (1,1) circle[radius=0.1];
					\draw[fill] (2,0) circle[radius=0.1];
					\end{tikzpicture}}};
			\node (2) at (-1,1)
			{\scalebox{0.2}
				{\begin{tikzpicture}
					\draw (0,0) --  (1,1) -- (2,0) -- (3,1) -- (4,0);
					\draw[fill] (0,0) circle[radius=0.1];
					\draw[fill] (1,1) circle[radius=0.1];
					\draw[fill] (2,0) circle[radius=0.1];
					\draw[fill] (3,1) circle[radius=0.1];
					\draw[fill] (4,0) circle[radius=0.1];
					\end{tikzpicture}}};
			\node (3) at (1,1)
			{\scalebox{0.2}
				{\begin{tikzpicture}
					\draw  (0,0) --  (2,2) --  (4,0);
					\draw[fill] (0,0) circle[radius=0.1];
					\draw[fill] (1,1) circle[radius=0.1];
					\draw[fill] (2,2) circle[radius=0.1];
					\draw[fill] (3,1) circle[radius=0.1];
					\draw[fill] (4,0) circle[radius=0.1];
					\end{tikzpicture}}};
			\node (4) at (-4,2.5)
			{\scalebox{0.2}
				{\begin{tikzpicture}
					\draw  (0,0) --  (1,1) --  (2,0) -- (3,1) -- (4,0) -- (5,1) -- (6,0);
					\draw[fill] (0,0) circle[radius=0.1];
					\draw[fill] (1,1) circle[radius=0.1];
					\draw[fill] (2,0) circle[radius=0.1];
					\draw[fill] (3,1) circle[radius=0.1];
					\draw[fill] (4,0) circle[radius=0.1];
					\draw[fill] (5,1) circle[radius=0.1];
					\draw[fill] (6,0) circle[radius=0.1];
					\end{tikzpicture}}};
			\node (5) at (-2,2.5)
			{\scalebox{0.2}
				{\begin{tikzpicture}
					\draw  (0,0) --  (2,2) --  (4,0) -- (5,1) --
					(6,0);
					\draw[fill] (0,0) circle[radius=0.1];
					\draw[fill] (1,1) circle[radius=0.1];
					\draw[fill] (2,2) circle[radius=0.1];
					\draw[fill] (3,1) circle[radius=0.1];
					\draw[fill] (4,0) circle[radius=0.1];
					\draw[fill] (5,1) circle[radius=0.1];
					\draw[fill] (6,0) circle[radius=0.1];
					\end{tikzpicture}}};
			\node (6) at (0,2.5)
			{\scalebox{0.2}
				{\begin{tikzpicture}
					\draw  (0,0) --  (1,1) --  (2,0) -- (4,2) --
					(6,0);
					\draw[fill] (0,0) circle[radius=0.1];
					\draw[fill] (1,1) circle[radius=0.1];
					\draw[fill] (2,0) circle[radius=0.1];
					\draw[fill] (3,1) circle[radius=0.1];
					\draw[fill] (4,2) circle[radius=0.1];
					\draw[fill] (5,1) circle[radius=0.1];
					\draw[fill] (6,0) circle[radius=0.1];			
					\end{tikzpicture}}};
			\node (7) at (2,2.5)
			{\scalebox{0.2}
				{\begin{tikzpicture}
					\draw  (0,0) --  (2,2) --  (3,1) -- (4,2) --
					(6,0);
					\draw[fill] (0,0) circle[radius=0.1];
					\draw[fill] (1,1) circle[radius=0.1];
					\draw[fill] (2,2) circle[radius=0.1];
					\draw[fill] (3,1) circle[radius=0.1];
					\draw[fill] (4,2) circle[radius=0.1];
					\draw[fill] (5,1) circle[radius=0.1];
					\draw[fill] (6,0) circle[radius=0.1];
					\end{tikzpicture}}};
			\node (8) at (4,2.5)
			{\scalebox{0.2}
				{\begin{tikzpicture}
					\draw  (0,0) --  (3,3) -- (6,0);
					\draw[fill] (0,0) circle[radius=0.1];
					\draw[fill] (1,1) circle[radius=0.1];
					\draw[fill] (2,2) circle[radius=0.1];
					\draw[fill] (3,3) circle[radius=0.1];
					\draw[fill] (4,2) circle[radius=0.1];
					\draw[fill] (5,1) circle[radius=0.1];
					\draw[fill] (6,0) circle[radius=0.1];
					\end{tikzpicture}}};
			\node (9) at (-6,5)
			{\scalebox{0.2}
				{\begin{tikzpicture}
					\draw  (0,0) --  (1,1) --  (2,0) -- (3,1) -- (4,0)
					-- (5,1) -- (6,0) -- (7,1) -- (8,0);
					\draw[fill] (0,0) circle[radius=0.1];
					\draw[fill] (1,1) circle[radius=0.1];
					\draw[fill] (2,0) circle[radius=0.1];
					\draw[fill] (3,1) circle[radius=0.1];
					\draw[fill] (4,0) circle[radius=0.1];
					\draw[fill] (5,1) circle[radius=0.1];
					\draw[fill] (6,0) circle[radius=0.1];
					\draw[fill] (7,1) circle[radius=0.1];
					\draw[fill] (8,0) circle[radius=0.1];
					\end{tikzpicture}}};
			\node (10) at (-4,5)
			{\scalebox{0.2}
				{\begin{tikzpicture}
					\draw  (0,0) --  (2,2) --  (4,0) -- (5,1) -- (6,0)
					-- (7,1) -- (8,0);
					\draw[fill] (0,0) circle[radius=0.1];
					\draw[fill] (1,1) circle[radius=0.1];
					\draw[fill] (2,2) circle[radius=0.1];
					\draw[fill] (3,1) circle[radius=0.1];
					\draw[fill] (4,0) circle[radius=0.1];
					\draw[fill] (5,1) circle[radius=0.1];
					\draw[fill] (6,0) circle[radius=0.1];
					\draw[fill] (7,1) circle[radius=0.1];
					\draw[fill] (8,0) circle[radius=0.1];
					\end{tikzpicture}}};
			\node (11) at (-2,5)
			{\scalebox{0.2}
				{\begin{tikzpicture}
					\draw  (0,0) --  (2,2) --  (3,1) -- (4,2) -- (6,0)
					-- (7,1) -- (8,0);
					\draw[fill] (0,0) circle[radius=0.1];
					\draw[fill] (1,1) circle[radius=0.1];
					\draw[fill] (2,2) circle[radius=0.1];
					\draw[fill] (3,1) circle[radius=0.1];
					\draw[fill] (4,2) circle[radius=0.1];
					\draw[fill] (5,1) circle[radius=0.1];
					\draw[fill] (6,0) circle[radius=0.1];
					\draw[fill] (7,1) circle[radius=0.1];
					\draw[fill] (8,0) circle[radius=0.1];
					\end{tikzpicture}}};
			\node (12) at (0,5)
			{\scalebox{0.2}
				{\begin{tikzpicture}
					\draw  (0,0) --  (2,2) --  (3,1) -- (4,2) -- (5,1)
					-- (6,2) -- (8,0);
					\draw[fill] (0,0) circle[radius=0.1];
					\draw[fill] (1,1) circle[radius=0.1];
					\draw[fill] (2,2) circle[radius=0.1];
					\draw[fill] (3,1) circle[radius=0.1];
					\draw[fill] (4,2) circle[radius=0.1];
					\draw[fill] (5,1) circle[radius=0.1];
					\draw[fill] (6,2) circle[radius=0.1];
					\draw[fill] (7,1) circle[radius=0.1];
					\draw[fill] (8,0) circle[radius=0.1];
					\end{tikzpicture}}};
			\node (13) at (2,5)
			{\scalebox{0.2}
				{\begin{tikzpicture}
					\draw  (0,0) --  (1,1) --  (2,0) -- (4,2) -- (6,0)
					-- (7,1) -- (8,0);
					\draw[fill] (0,0) circle[radius=0.1];
					\draw[fill] (1,1) circle[radius=0.1];
					\draw[fill] (2,0) circle[radius=0.1];
					\draw[fill] (3,1) circle[radius=0.1];
					\draw[fill] (4,2) circle[radius=0.1];
					\draw[fill] (5,1) circle[radius=0.1];
					\draw[fill] (6,0) circle[radius=0.1];
					\draw[fill] (7,1) circle[radius=0.1];
					\draw[fill] (8,0) circle[radius=0.1];
					\end{tikzpicture}}};
			\node (14) at (4,5)
			{\scalebox{0.2}
				{\begin{tikzpicture}
					\draw  (0,0) --  (1,1) --  (2,0) -- (4,2) -- (5,1)
					-- (6,2) -- (8,0);
					\draw[fill] (0,0) circle[radius=0.1];
					\draw[fill] (1,1) circle[radius=0.1];
					\draw[fill] (2,0) circle[radius=0.1];
					\draw[fill] (3,1) circle[radius=0.1];
					\draw[fill] (4,2) circle[radius=0.1];
					\draw[fill] (5,1) circle[radius=0.1];
					\draw[fill] (6,2) circle[radius=0.1];
					\draw[fill] (7,1) circle[radius=0.1];
					\draw[fill] (8,0) circle[radius=0.1];
					\end{tikzpicture}}};
			\node (15) at (6,5)
			{\scalebox{0.2}
				{\begin{tikzpicture}
					\draw  (0,0) --  (1,1) --  (2,0) -- (3,1) -- (4,0)
					-- (6,2) -- (8,0);
					\draw[fill] (0,0) circle[radius=0.1];
					\draw[fill] (1,1) circle[radius=0.1];
					\draw[fill] (2,0) circle[radius=0.1];
					\draw[fill] (3,1) circle[radius=0.1];
					\draw[fill] (4,0) circle[radius=0.1];
					\draw[fill] (5,1) circle[radius=0.1];
					\draw[fill] (6,2) circle[radius=0.1];
					\draw[fill] (7,1) circle[radius=0.1];
					\draw[fill] (8,0) circle[radius=0.1];	
					\end{tikzpicture}}};
			\node (16) at (0,6.5)
			{\scalebox{0.2}
				{\begin{tikzpicture}
					\draw  (0,0) --  (1,1) --  (2,0) -- (3,1) -- (4,0)
					-- (5,1) -- (6,0) -- (7,1) -- (8,0) -- (9,1) --
					(10,0);
					\draw[fill] (0,0) circle[radius=0.1];
					\draw[fill] (1,1) circle[radius=0.1];
					\draw[fill] (2,0) circle[radius=0.1];
					\draw[fill] (3,1) circle[radius=0.1];
					\draw[fill] (4,0) circle[radius=0.1];
					\draw[fill] (5,1) circle[radius=0.1];
					\draw[fill] (6,0) circle[radius=0.1];
					\draw[fill] (7,1) circle[radius=0.1];
					\draw[fill] (8,0) circle[radius=0.1];
					\draw[fill] (9,1) circle[radius=0.1];
					\draw[fill](10,0)circle[radius=0.1];
					\end{tikzpicture}}};
			\draw[very thin] (1) -- (2) (1) -- (3) (2) -- (4) (2)--(5) (2)--(6) (2)--(7) (3)--(4) (3)--(5) (3)--(6) (3)--(7) (3)--(8) (4)--(9) (5)--(9) (6)--(9) (7)--(9) (10) -- (4) (10)--(5) (10)--(7) (12)--(4) (11)--(4) (11)--(5) (11)--(7) (11)--(8) (12)--(4) (12)--(5) (12)--(6) (12)--(7) (12)--(8) (13)--(4) (13)--(5) (13)--(6) (13)--(8)  (14)--(4) (14)--(6) (14)--(7) (14)--(8)  (15)--(4) (15)--(6) (15)--(8)  (16) -- (9) (16)--(10) (16)--(11) (16)--(12) (16)--(13) (16)--(14) (16)--(15);
	\end{tikzpicture}}
\end{center}
\caption{The Hasse diagram of the interval $[UD,(UD)^{5}]$ in the
Dyck pattern poset}
\label{fig1}
\end{figure}
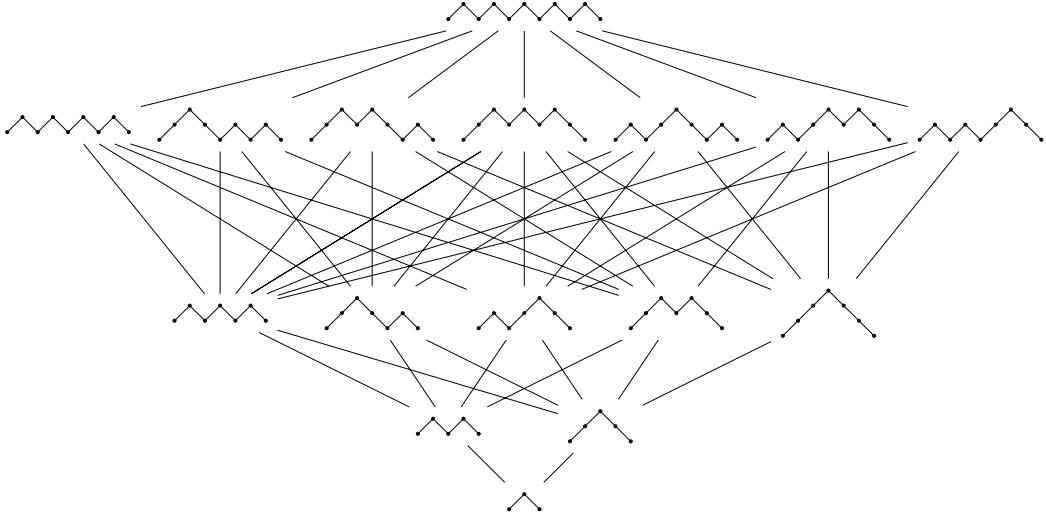

\bigskip
The sequence $(s_0^{(k)}([UD,(UD)^{n}]))_{n\geq k\geq 0}$ appears as A137940 in \cite{S}.
The first few lines of the associated triangle are recorded below (Table \ref{m1}).
\begin{table}
\begin{center}
\begin{tabular}{|c|ccccccccc|}
  \hline
  $n\backslash k$ & 1 & 2 & 3 & 4 & 5 & 6 & 7 & 8 & 9 \\
  \hline
  1 & 1 &  &  &  &  &  &  &  &  \\
  2 & 1 & 1 &  &  &  &  &  &  &  \\
  3 & 1 & 2 & 1 &  &  &  &  &  &  \\
  4 & 1 & 2 & 4 & 1 &  &  &  &  &  \\
  5 & 1 & 2 & 5 & 7 & 1 &  &  &  &  \\
  6 & 1 & 2 & 5 & 13 & 11 & 1 &  &  &  \\
  7 & 1 & 2 & 5 & 14 & 31 & 16 & 1 &  &  \\
  8 & 1 & 2 & 5 & 14 & 41 & 66 & 22 & 1 &  \\
  9 & 1 & 2 & 5 & 14 & 42 & 116 & 127 & 29 & 1 \\
  \hline
\end{tabular}
\end{center}
\caption{Triangle associated with
	sequence A137940}
\label{m1}
\end{table}
As a consequence of the last proposition, and since the Narayana array is symmetric,
the $k$-th diagonal of Table 1 contains the sum of the first $k$ columns of the Narayana array.
Therefore, $s_0^{(k)}([UD,(UD)^{n}])$ counts the number of Dyck paths of semilength $k$ having at most $n-k+1$ peaks.

The sequence $(s_{0}([UD,(UD)^{n}]))_{n\geq 0}$ of the sizes of the intervals $[UD,(UD)^{n}]$ starts
1,2,4,8, 16,33,70,152,337 and is not recorded in \cite{S};
however, it is the sequence of the partial sums of A004148 of \cite{S}, called ``generalized Catalan numbers" and counting, among other things, peak-less Motzkin paths with respect to the length.
In fact, it is not difficult to find a bijective explanation.

\begin{proposition}
There is a bijection between $[UD,(UD)^{n}]$ and the set of peak-less Motzkin paths of length at most $n$.
\end{proposition}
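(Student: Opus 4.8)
The plan is to prove the Proposition by exhibiting a recursively defined bijection $\phi$ that matches the first-return decomposition of Dyck paths with the block decomposition of peakless Motzkin paths, and that turns a suitable length-type statistic on Dyck paths into the length of the Motzkin path.

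First I would recast membership in the interval. For a Dyck path $P=U^{\alpha_1}D^{\beta_1}\cdots U^{\alpha_m}D^{\beta_m}$ of semilength $k$ one has $m=\mathsf{asc}(P)=\#\{\text{peaks of }P\}$ and $\alpha=\beta=k$, so the Lemma(s) above say that $P\le(UD)^n$ precisely when $2k-m\le n$ (the accompanying inequality $m\le n$ being automatic since $m\le k\le 2k-m$; the same inequality also forces $\mathrm{sl}(P)\le n$ whenever $2k-m\le n$). Hence $[UD,(UD)^n]$ is exactly the set of nonempty Dyck paths $P$ for which $w(P)\le n$, where I set $w(P):=2k-m$.

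The core step is the bijection itself. Every Dyck path is, uniquely, either $\varepsilon$ (empty), or $UD\,R$, or $U\,L\,D\,R$ with $L$ a nonempty Dyck path and $R$ a Dyck path; dually, every peakless Motzkin path is, uniquely, either $\varepsilon$, or $H\,R'$, or $U\,L'\,D\,R'$ with $L'$ a nonempty peakless Motzkin path and $R'$ a peakless Motzkin path, the key observation being that, for such $L'$ and $R'$, the word $U\,L'\,D\,R'$ is again peakless --- because $L'$ begins with $U$ or $H$ and ends with $D$ or $H$, no $UD$ factor is created at any of the three junctions. Define $\phi$ recursively by $\phi(\varepsilon)=\varepsilon$, $\phi(UD\,R)=H\,\phi(R)$ and $\phi(U\,L\,D\,R)=U\,\phi(L)\,D\,\phi(R)$. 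As the two grammars are isomorphic, $\phi$ is a bijection from all Dyck paths onto all peakless Motzkin paths, sending nonempty to nonempty and $UD\mapsto H$. Finally, from $\mathrm{sl}(U\,L\,D\,R)=1+\mathrm{sl}(L)+\mathrm{sl}(R)$ together with $\mathrm{pk}(UD\,R)=1+\mathrm{pk}(R)$ and $\mathrm{pk}(U\,L\,D\,R)=\mathrm{pk}(L)+\mathrm{pk}(R)$ (for $L\ne\varepsilon$), an immediate induction gives $w(\varepsilon)=0$, $w(UD\,R)=1+w(R)$ and $w(U\,L\,D\,R)=2+w(L)+w(R)$, which is exactly the recursion for the length of $\phi(P)$; so $w(P)$ equals the length of $\phi(P)$ for every Dyck path $P$.

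Combining the two steps, $\phi$ restricts to a bijection between $[UD,(UD)^n]$ and the peakless Motzkin paths of length at most $n$ (barring the empty one, which is consistent with $UD$, not $\varepsilon$, being the minimum of the poset), which is the claim; moreover the same argument yields a refinement by rank, since one checks that $\mathrm{sl}(\phi^{-1}(M))$ equals the number of non-down steps of $M$. The step needing genuine care --- and the one I would spell out in full --- is the peak accounting: that moving from $P$ to its first-return components creates and destroys no peak other than the single $UD$ split off in the $UD\,R$ case, and, on the other side, that the arch $U\,L'\,D\,R'$ remains peakless exactly because $L'$ is forced to be nonempty; everything else is routine bookkeeping.
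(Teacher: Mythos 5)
Your proof is correct, and the map it constructs is in fact the very same bijection the paper uses: unwinding your recursion $\phi(UD\,R)=H\,\phi(R)$, $\phi(U\,L\,D\,R)=U\,\phi(L)\,D\,\phi(R)$ shows that $\phi$ simply replaces each peak $UD$ of the Dyck path by a level step $H$, which is the inverse of the paper's one-line description (``replace each level step of the Motzkin path with a peak''). Where you genuinely diverge is in the justification: the paper asserts that the substitution lands in $[UD,(UD)^n]$ and is bijective (``easily seen'', ``not difficult to realize''), whereas you derive everything structurally --- an isomorphism between the first-return grammar of Dyck paths (split according to whether the first arch is trivial) and the block grammar of peakless Motzkin paths, with the crucial observation that peaklessness of $U\,L'\,D\,R'$ is exactly equivalent to $L'$ being nonempty, plus an inductive verification that the statistic $w(P)=2\,\mathrm{sl}(P)-\mathrm{asc}(P)$ transforms into the Motzkin length. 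Your route costs more writing but buys a complete proof of the three facts the paper leaves to the reader (well-definedness, bijectivity, and the length correspondence via Lemma 2), and it yields the rank refinement for free. The only point worth flagging is the off-by-one on the empty path, which you already handle correctly: the interval excludes the empty Dyck path, so the target set must be understood as the \emph{nonempty} peakless Motzkin paths of length at most $n$ (consistent with the partial sums of A004148 quoted in the paper).
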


\begin{proof}
Given a peak-less Motzkin path of length $k\leq n$, just replace each of its level steps with a peak:
the resulting Dyck path is easily seen to be in $[UD,(UD)^{n}]$.
Also, it is not difficult to realize that such a map is indeed a bijection.
\end{proof}

\section{The interval $[UD,U^{a+h}D^aU^bD^{b+h}]$}

\subsection{Size of the interval}

Let $a,b,h$ be nonnegative integers. Denote with $Q_{a,b}^{(h)}=U^{a+h}D^aU^bD^{b+h}$ the generic Dyck paths with two peaks.
Our first goal is to find an explicit formula for the cardinality of $\left[UD,Q_{a,b}^{(h)}\right]$, depending on $a,b$ and $h$.
Without loss of generality, we can suppose $b\geq a\geq 1$.

It is clear that each path in the above interval has either one peak or two peaks.
The generic path with only one peak in $\left[UD,Q_{a,b}^{(h)}\right]$ has height $j$, with $1\leq j\leq b+h$.
Therefore there are $b+h$ such paths.
From now on, we will focus on the remaining elements of our interval, i.e. paths having exactly two peaks.

We start with the case $h=0$, that is $Q_{a,b}^{(0)}=U^a D^a U^b D^b$: this is the generic \emph{nonelevated} Dyck path with two peaks.

\begin{proposition}\label{prop3}
Denote with $\varphi_0 (a,b)$ the number of Dyck paths having exactly two peaks in the interval $[UD,Q_{a,b}^{(0)}]$. Then
\begin{equation}\label{phi_0}
\varphi_0 (a,b)=\frac{a(a+1)(3b-a+1)}{6}\ .
\end{equation}
\end{proposition}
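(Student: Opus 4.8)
The plan is to reduce the statement to a lattice-point count and then evaluate a small double sum. First I would record the shape of two-peak Dyck paths: having exactly two peaks is the same as having exactly two ascents, so such a path is $U^sD^tU^uD^v$ with $s,t,u,v\ge 1$; it is a genuine Dyck path precisely when $s+u=t+v$ (equally many $U$'s and $D$'s) and $s\ge t$ (the height attains its minimum $s-t$ at the end of the first descent, and one checks no other prefix condition is needed). Conversely, every such quadruple yields a two-peak Dyck path. Thus the set to be enumerated is in bijection with $\{(s,t,u,v): s,t,u,v\ge 1,\ s+u=t+v,\ s\ge t,\ U^sD^tU^uD^v\le Q_{a,b}^{(0)}\}$.

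The main step is to characterize when $U^sD^tU^uD^v\le Q_{a,b}^{(0)}=U^aD^aU^bD^b$. I would split $Q_{a,b}^{(0)}$ into its four blocks (``phases'') $U^a$, $D^a$, $U^b$, $D^b$ and track where the blocks of the pattern land in an occurrence. The block $D^t$ precedes the nonempty up-block $U^u$, so it cannot meet the last phase $D^b$; hence $D^t$ lies entirely in the $D^a$-phase. Consequently $U^u$, being up-steps occurring after the $D^a$-phase, lies in the $U^b$-phase; $U^s$, being up-steps before $D^t$, lies in the $U^a$-phase; and $D^v$, being down-steps after $U^u$, lies in the $D^b$-phase. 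This forces $s\le a$, $t\le a$, $u\le b$, $v\le b$, and conversely these inequalities plainly permit an occurrence (place each block at the start of the corresponding phase). Since $s\ge t$ together with $s+u=t+v$ gives $t\le s$ and $u\le v$, the only binding constraints are $s\le a$ and $v\le b$.

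The last step is the computation. Writing $v=s-t+u$ and using $1\le t\le s\le a\le b$ (which makes $v\ge1$ and $u\le v$ automatic), one gets
\[
\varphi_0(a,b)=\#\{(s,t,u): 1\le t\le s\le a,\ u\ge 1,\ s-t+u\le b\}=\sum_{s=1}^{a}\sum_{t=1}^{s}(b-s+t),
\]
where the innermost count equals $b-s+t$ because $b-s+t\ge b-a+1\ge1$. The inner sum over $t$ is $sb-\tfrac{s(s-1)}{2}$, and summing over $s$, with $\sum_{s=1}^{a}s(s-1)=\tfrac{(a-1)a(a+1)}{3}$, yields
\[
\varphi_0(a,b)=b\,\frac{a(a+1)}{2}-\frac12\cdot\frac{(a-1)a(a+1)}{3}=\frac{a(a+1)}{2}\Bigl(b-\frac{a-1}{3}\Bigr)=\frac{a(a+1)(3b-a+1)}{6},
\]
as claimed.

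I expect the pattern-containment characterization to be the only delicate point: one must argue carefully that the four blocks of the pattern are forced into the four phases of $Q_{a,b}^{(0)}$ as described (in particular that no block of the pattern can straddle two phases of $Q_{a,b}^{(0)}$ in a way that still produces an occurrence), after which both the parametrization of two-peak paths and the final summation are routine.
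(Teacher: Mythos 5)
Your proof is correct and takes essentially the same approach as the paper: the paper also parametrizes a two-peak subpath by how many steps it takes from each of the four runs of $Q_{a,b}^{(0)}$, arrives at the identical sum $\sum_{k=1}^{a}\sum_{t=1}^{k}\sum_{s=1}^{b-k+t}1$, and simplifies it to the stated formula. Your justification of the containment criterion (each block of the pattern is forced into the corresponding phase of $Q_{a,b}^{(0)}$) is somewhat more explicit than the paper's, but the argument is the same.
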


\begin{proof}
The generic Dyck path $P$ with two peaks in $[UD,Q_{a,b}^{(0)}]$ can be constructed as follows.
Start by choosing $k$ up steps from the first run of $Q_{a,b}^{(0)}$, for $k$ running from 1 to $a$.
Then choose $t$ down steps from the second run: here it must be $1\leq t\leq k$, since otherwise $P$ would have some points of negative height.
Similarly, the up steps chosen from the third run of $Q_{a,b}^{(0)}$ cannot be too many,
otherwise there would remain too few down steps in the last run to complete the Dyck path.
Specifically, it is possible to select $s$ up steps from the third run of $Q_{a,b}^{(0)}$, with $k-t+s\leq b$
(the quantity $k-t+s$ being the height of the path $P$ at the end of the last ascending run), and so $1\leq s\leq b-k+t$.
Summing up, we get
$$
\varphi_0(a,b)=\sum_{k=1}^{a}\sum_{t=1}^{k}\sum_{s=1}^{b-k+t} 1
$$
which reduces to
$$
\varphi_0 (a,b)=\frac{a(a+1)(3b-a+1)}{6},
$$
as desired.
\end{proof}

The sequence $(\varphi_0 (a,b))_{1\leq a\leq b}$ is A082652 in \cite{S},
where an interpretations in terms of squares inside an $a\times b$ rectangular grid is provided.
We will discuss the connections with our combinatorial setting at the end of the present section.

We are now ready to express the number of Dyck paths having exactly two peaks lying below $Q_{a,b}^{(h)}$.

\begin{proposition}
Denote with $\varphi_h (a,b)$ the number of Dyck paths having exactly two peaks in the interval $[UD,Q_{a,b}^{(h)}]$. Then
\begin{equation}\label{phi_h_def}
\varphi_h (a,b)=\varphi_0 (a,b)+hab\ .
\end{equation}
\end{proposition}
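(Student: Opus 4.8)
The plan is to reduce the count $\varphi_h(a,b)$ to a lattice‑point count, and then run an induction on $h$ in which $Q_{a,b}^{(h-1)}$ sits inside $Q_{a,b}^{(h)}$ as a subword.

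\textbf{Step 1: characterizing the two‑peak paths below $Q_{a,b}^{(h)}$.} Every Dyck path with exactly two peaks has the form $U^pD^qU^rD^s$ with $1\le q\le p$, $r\ge 1$, and $s=p-q+r$ forced by the Dyck condition (so $s\ge 1$ automatically). I claim such a path is a subword of $Q_{a,b}^{(h)}=U^{a+h}D^aU^bD^{b+h}$ if and only if $p\le a+h$, $q\le a$, $r\le b$ and $s=p-q+r\le b+h$. The nontrivial direction is that any embedding is forced run by run: since $r\ge 1$, there is an up step of $U^pD^qU^rD^s$ occurring, inside $Q_{a,b}^{(h)}$, to the right of all the down steps of the $D^q$‑block; that up step must lie in the third run $U^b$, which forces the whole $D^q$‑block into the second run $D^a$ (hence $q\le a$), the whole $U^p$‑block into the first run $U^{a+h}$ (hence $p\le a+h$), the $U^r$‑block into $U^b$ (hence $r\le b$), and the $D^s$‑block into $D^{b+h}$ (hence $s\le b+h$); conversely these inequalities obviously allow such an embedding. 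Consequently
$$\varphi_h(a,b)=\#\bigl\{(p,q,r)\in\mathbb{Z}^3:\ 1\le q\le p\le a+h,\ q\le a,\ 1\le r\le b,\ p-q+r\le b+h\bigr\}.$$

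\textbf{Step 2: the inductive step $\varphi_h(a,b)-\varphi_{h-1}(a,b)=ab$.} For $h\ge 1$ the word $Q_{a,b}^{(h-1)}$ is a subword of $Q_{a,b}^{(h)}$, so the triple set for $h-1$ is contained in that for $h$, and the difference $\varphi_h(a,b)-\varphi_{h-1}(a,b)$ counts the triples admissible for $h$ but not for $h-1$, namely those with $p=a+h$ or with $s=p-q+r=b+h$. I would evaluate this by inclusion--exclusion on these two conditions. For $p=a+h$: then $q\in\{1,\dots,a\}$ and, after imposing $s\le b+h$, one gets $1\le r\le b-a+q$ (a nonempty range precisely because $b\ge a$), so this sub‑count is $\sum_{q=1}^{a}(b-a+q)=ab-a^2+\binom{a+1}{2}$. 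For $s=b+h$: here $r=b+h-p+q$ is determined, $r\le b$ forces $p\ge q+h$, and $p\le a+h$, so $p$ takes $a-q+1$ values, giving $\sum_{q=1}^{a}(a-q+1)=\binom{a+1}{2}$. For both conditions simultaneously, $p=a+h$ fixes $r=b-a+q$, leaving one triple for each $q\in\{1,\dots,a\}$, i.e.\ $a$ of them. Hence the difference is $\bigl(ab-a^2+\binom{a+1}{2}\bigr)+\binom{a+1}{2}-a=ab-a^2+a(a+1)-a=ab$.

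\textbf{Step 3: conclusion.} The base case $h=0$ is Proposition~\ref{prop3}, and induction on $h$ using Step~2 yields $\varphi_h(a,b)=\varphi_0(a,b)+hab$.

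\textbf{Where the difficulty lies.} The genuinely delicate point is Step~1: one must argue carefully that embedding a two‑peak path into $Q_{a,b}^{(h)}$ is essentially rigid, so that the enumeration collapses to the clean constrained triple count. Once that is in place, Step~2 is bookkeeping, the only point requiring attention being the nonemptiness of the summation ranges, which is exactly where the standing hypothesis $b\ge a$ is used. As an alternative to the induction, one could also compute $\varphi_h(a,b)=\sum_{q=1}^{a}\sum_{p=q}^{a+h}\min\{b,\;b+h-p+q\}$ directly, splitting the minimum according to whether $p-q\le h$, and check that the result equals $\varphi_0(a,b)+hab$.
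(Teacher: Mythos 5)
Your proof is correct, and its skeleton is the same as the paper's: both peel off the extension from $h-1$ to $h$ and show that each such layer contributes exactly $ab$ new two-peak paths, then telescope down to the base case $h=0$. The difference is in how that layer is counted. The paper identifies the layer $\mathcal{C}_h$ as the paths with first run of length $a+h$ or last run of length $b+h$, maps it bijectively onto the fixed set $\mathcal{C}_0$ by stripping the outer $U^h\cdots D^h$, and then shows $|\mathcal{C}_0|=ab$ by observing that such a path is uniquely determined by the pair (length of first descending run, length of last ascending run) ranging over $\{1,\dots,a\}\times\{1,\dots,b\}$. You instead reduce everything to the lattice-point description of Step 1 (which is essentially the paper's later lemma characterizing containment of two-peak paths by componentwise inequalities on run lengths, and which you prove more carefully than the paper does) and evaluate the layer by a direct inclusion--exclusion sum over the two boundary conditions $p=a+h$ and $s=b+h$; your arithmetic there checks out, including the correct use of $b\ge a$ to guarantee nonempty ranges. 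The paper's route is slicker and explains \emph{why} the answer is $ab$ via an explicit parametrization; yours is more mechanical but requires no cleverness beyond the containment criterion, and your careful rigidity argument in Step 1 is a genuine improvement on the paper's one-line justification of that criterion.
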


\begin{proof}
Denote with $[UD,Q_{a,b}^{(\ell )}]_2$ the set of Dyck paths in $[UD,Q_{a,b}^{(\ell )}]$ having exactly two peaks.
For each $i=1,\ldots ,h$, define $\mathcal{C}_i =[UD,Q_{a,b}^{(i)}]_2 \setminus [UD,Q_{a,b}^{(i-1)}]_2$.
It is not hard to see that a Dyck path $U^\alpha D^\beta U^\gamma D^\delta \in \mathcal{C}_i$ if and only if
\begin{itemize}
\item $(\alpha ,\beta ,\gamma ,\delta)\leq (i+a,a,b,i+b)$ and
\item $\alpha =i+a$ or $\delta =i+b$,
\end{itemize}
where the above partial order on tuples has to be understood componentwise.

For $i=h$, the above definitions immediately imply that
$$
\varphi_h (a,b)=\varphi_{h-1}(a,b)+|\mathcal{C}_h|.
$$

Iterating this argument, we get:
\begin{equation}\label{fi}
\varphi_h (a,b)=\varphi_0 (a,b)+\sum_{i=1}^{h}|\mathcal{C}_i|.
\end{equation}

Now observe that, if $\Gamma \in \mathcal{C}_i$, then $\Gamma =U^i PD^i$, for some Dyck path $P$ with two peaks.
This is due to the fact that, if the first ascending run of $\Gamma$ has length $i+a$,
the first descending run of $\Gamma$ terminates at height $\geq i$ (because $\Gamma \in [UD,Q_{a,b}^{(h)}]_2$).
Of course an analogous argument holds if the last descending run of $\Gamma$ has length $i+b$.
As a consequence, for each $i=1,\ldots ,h$, we can define a function from $\mathcal{C}_i$ to
$\mathcal{C}_0 =\{ P=U^\alpha D^\beta U^\gamma D^\delta \in [UD,Q_{a,b}^{(0)}]_2 \, |\, \alpha =a \textnormal{ or } \delta =b\}$ which maps $\Gamma =U^i PD^i$ into $P$.
It is easy to see that such a function is well defined (i.e. $P\in \mathcal{C}_0$) and is bijective.
Thus formula (\ref{fi}) can be rewritten as
\begin{equation}\label{phi_h}
\varphi_h (a,b)=\varphi_0 (a,b)+h|\mathcal{C}_0 |.
\end{equation}

Finally, in order to enumerate $\mathcal{C}_0$,
we observe that a path $P\in \mathcal{C}_0$ is uniquely determined by the length $s$ of its first descending run
and the length $t$ of its last ascending run, where $1\leq s\leq a$ and $1\leq t\leq b$.
In fact, the unique path of $\mathcal{C}_0$ corresponding to a legal choice of $s$ and $t$ is
$U^x Q_{s,t}^{(0)}D^x$, where $x=\min \{ a-s,b-t\}$.
Hence we have that $|\mathcal{C}_0 |=ab$. This leads immediately to formula (\ref{phi_h_def}), as desired.
\end{proof}

As we have already noted,
the triangular array determined by $(\varphi_0(a,b))_{1\leq a\leq b}$ is sequence A082652 in \cite{S},
which counts the numbers of squares that can be found in an $a\times b$ rectangular grid.
We now describe a bijection between the set of such squares and the set $[UD,Q_{a,b}^{(0)}]_2$
of Dyck paths having two peaks and which are less than or equal to $Q_{a,b}^{(0)}$.
To this aim,
we encode the Dyck path $U^{k+i}D^iU^{j}D^{j+k}\in [UD,Q_{a,b}^{(0)}]_2$ with the triple $(i,j;k)$.
Observe that $(i,j;k)$ represents a legal Dyck path in $[UD,Q_{a,b}^{(0)}]_2$ if and only if
$i,j\geq 1$, $i+k\leq a$ and $j+k\leq b$.
Moreover,
a triple $(i,j;k)$ satisfying the above conditions also uniquely determines a square
inside an $a\times b$ rectangular grid of unit cells as follows.
Label the rows of the grid with integers $1,2,\ldots ,a$ from top to bottom
and the columns with integers $1,2,\ldots ,b$ from left to right.
A unit cell lying at the intersection between row $i$ and column $i$ will be said in \emph{position} $(i,j)$.
A square inside the grid can be characterized by
providing the position $(i,j)$ of its topmost and leftmost cell and the length $k+1$ of its side.
Notice that
the triple $(i,j;k)$ determines (in a unique way) a square if and only if $i,j\geq 1$, $i+k\leq a$ and $j+k\leq b$.
We have thus found the same encoding both for
Dyck paths in $[UD,Q_{a,b}^{(0)}]_2$ and for squares inside an
 $a\times b$ rectangular grid, which gives the desired bijection.
In Figure \ref{fig2} the path encoded by the triple $(2,3;2)$ is mapped into the corresponding square.
\begin{figure}
	\begin{center}
		\begin{tikzpicture}[scale=0.2]
		\node at (-13,2){$Q_{4,6}^{(0)}\ \ \geq\ \  U^4D^2U^3D^5$\ \ =\ \ };
		\draw [black](0,0)--(2,2);
		\draw[gray!33](2,2)--(4,4)--(6,2);
		\draw[gray!66](6,2)--(9,5)--(12,2);
		\draw [black](12,2)--(14,0);





		\draw[fill] (0,0) circle[radius=0.1];
		\draw[fill] (1,1) circle[radius=0.1];
		\draw[fill] (2,2) circle[radius=0.1];
		\draw[fill] (3,3) circle[radius=0.1];
		\draw[fill] (4,4) circle[radius=0.1];
		\draw[fill] (5,3) circle[radius=0.1];
		\draw[fill] (6,2) circle[radius=0.1];
		\draw[fill] (7,3) circle[radius=0.1];
		\draw[fill] (8,4) circle[radius=0.1];
		\draw[fill] (9,5) circle[radius=0.1];
		\draw[fill] (10,4) circle[radius=0.1];
		\draw[fill] (11,3) circle[radius=0.1];
		\draw[fill] (12,2) circle[radius=0.1];
		\draw[fill] (13,1) circle[radius=0.1];
		\draw[fill] (14,0) circle[radius=0.1];
		\draw[<->] (16,3.5)--(20,3.5);

		\draw[->,line width=1,gray!33] (21,5)--(23,5);
		\draw[->,line width=1,gray!66] (29,10.5)--(29,8.5);


		\draw[step=2cm,gray,very thin] (24,0) grid (36,8);

		\draw[pattern=north west lines,pattern] (28,0) rectangle (34,6);

		\end{tikzpicture}
	\end{center}
	\caption{The path $U^4D^2U^3D^5\leq Q_{4,6}^{(0)}$ and the associated square inside the grid $4\times 6$}
	\label{fig2}
\end{figure}
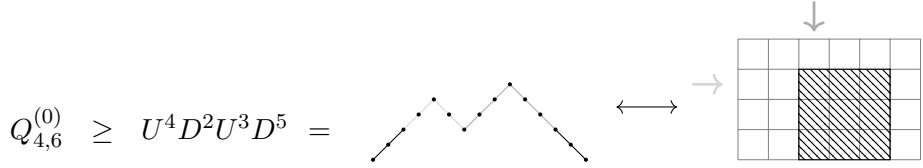

\bigskip

The above bijection can be exploited to transport the order structure on Dyck paths
to a description in terms of squares in a grid.
In the next lemma, we use the above encoding into triples to give an alternative presentation
of the pattern order inside $[UD,Q_{a,b}^{(0)}]_2$.

\begin{lemma}
Set $\Theta =\{ (i,j;k)\, |\, i,j\geq 1, i+k\leq a, j+k\leq b$.
Define a partial order on $\Theta$ by setting $(\alpha ,\beta ;\gamma )\sqsubseteq (i,j;k)$ when
$(\alpha ,\beta ,\gamma )\leq (i,j,\min \{ i+k-\alpha ,j+k-\beta \}$,
where $\leq$ is the usual coordinatewise order on $\mathbb{N}^3$.
Then $(\alpha ,\beta ;\gamma )\sqsubseteq (i,j;k)$ if and only if
$(\alpha ,\beta ;\gamma )\leq (i,j;k)$ in the Dyck pattern poset,
i.e. $U^{\gamma +\alpha}D^{\alpha}U^{\beta}D^{\beta +\gamma}\leq U^{k+i}D^i U^j D^{j+k}$.
\end{lemma}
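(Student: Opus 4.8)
The plan is to prove the equivalence directly by analyzing when a word $U^{\gamma+\alpha}D^\alpha U^\beta D^{\beta+\gamma}$ occurs as a subword of $U^{k+i}D^i U^j D^{j+k}$, using the structural constraints that both are Dyck words. First I would observe that any occurrence of a two-peak Dyck word inside another two-peak Dyck word must respect the ``block structure'': the up steps of the first ascent of the smaller path must be matched inside the first ascent of the larger one, and similarly for the descents. More precisely, I claim that in any embedding of $U^{\gamma+\alpha}D^\alpha U^\beta D^{\beta+\gamma}$ into $U^{k+i}D^iU^jD^{j+k}$, all $\gamma+\alpha$ leading up steps go into the block $U^{k+i}$, the $\alpha$ down steps go into $D^i$, the $\beta$ up steps go into $U^j$, and the $\beta+\gamma$ trailing down steps go into $D^{j+k}$. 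This follows because the smaller path has exactly two ascents and two descents, the larger one has exactly two ascents and two descents in a fixed alternating order $U^+D^+U^+D^+$, and a subword preserving this shape cannot ``straddle'' blocks: the first $D$ of the smaller path must be matched at or after the first $D$-block of the larger, but the up steps preceding it already force us past the first $U$-block having been used up, etc. I would spell out this monotonicity argument carefully, since it is the one genuinely delicate point.

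Granting the block-respecting structure, the existence of an embedding reduces to three numerical conditions: $\gamma + \alpha \le k+i$ (enough up steps in the first ascent), $\beta \le j$ (enough up steps in the second ascent), and $\beta + \gamma \le j+k$ (enough down steps in the last descent). The condition on $D^i$, namely $\alpha \le i$, turns out to be implied: from $\gamma+\alpha\le k+i$ we do not directly get $\alpha\le i$, so instead I would argue via the Dyck (height) constraint. Here is the cleaner route: an embedding of the abstract word exists into the abstract word $U^{k+i}D^iU^jD^{j+k}$ iff the four coordinatewise inequalities $\alpha+\gamma\le i+k$, $\alpha\le i$, $\beta\le j$, $\beta+\gamma\le j+k$ hold, but because $(\alpha,\beta;\gamma)$ and $(i,j;k)$ are both \emph{legal triples} (so $\alpha\le a\le i+k-(\text{something})$... ), one shows the middle two together with a max-condition suffice. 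Concretely I would show: $U^{\gamma+\alpha}D^\alpha U^\beta D^{\beta+\gamma}$ is a subword of $U^{k+i}D^iU^jD^{j+k}$ if and only if $\alpha\le i$, $\beta\le j$, $\gamma\le k$, and $\gamma + \alpha \le i+k$ and $\gamma+\beta\le j+k$ — and then observe that $\gamma\le\min\{i+k-\alpha, j+k-\beta\}$ is exactly the conjunction of the last two, while $\alpha\le i$ and $\beta\le j$ are the first two components of $\sqsubseteq$. Finally $\gamma\le k$ is redundant: if $\gamma>k$ then $\gamma+\alpha>k+\alpha\ge k+1$ could still be $\le k+i$, so redundancy needs $\alpha\le i$... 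I would instead just absorb $\gamma\le k$ into the statement of $\sqsubseteq$ by noting $\min\{i+k-\alpha,j+k-\beta\}\le k$ is \emph{not} automatic, so the definition of $\sqsubseteq$ as written already encodes $\gamma\le\min\{i+k-\alpha,j+k-\beta\}$, and I must also separately check this min is the right bound — which is precisely where the ``$U^x Q_{s,t}^{(0)} D^x$'' normal form from the proof of the previous proposition comes in.

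The clean way to organize this is therefore: (1) prove the block-respecting lemma for embeddings of two-peak words; (2) deduce that $U^{\gamma+\alpha}D^\alpha U^\beta D^{\beta+\gamma}\le U^{k+i}D^iU^jD^{j+k}$ iff $\alpha\le i$, $\beta\le j$, and $\alpha+\gamma\le i+k$ and $\beta+\gamma\le j+k$ (the latter two being the "Dyck path of the embedding stays nonnegative / fits" conditions, each obtained by pairing up steps of an ascent with down steps to its right exactly as in Lemma 2); (3) observe that these last two inequalities are together equivalent to $\gamma\le\min\{i+k-\alpha,\,j+k-\beta\}$, which combined with $\alpha\le i$, $\beta\le j$ is exactly $(\alpha,\beta,\gamma)\le(i,j,\min\{i+k-\alpha,j+k-\beta\})$, i.e. $(\alpha,\beta;\gamma)\sqsubseteq(i,j;k)$. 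The main obstacle is step (1): verifying rigorously that no ``crooked'' embedding (one that uses, say, an up step from $U^j$ to match part of the first ascent of the small path) can exist. I expect this to follow from a short argument tracking, for a fixed embedding, the position of the image of the last letter of $U^{\gamma+\alpha}$ and of the first letter of $D^{\beta+\gamma}$, and using that between them the large word reads $D^i U^j$ (or a subword thereof) which contains no room to fit $D^\alpha U^\beta$ other than in the obvious aligned way once the endpoints are pinned; alternatively one can invoke Lemma 2 with $m=2$ directly, setting $\alpha_1=\gamma+\alpha$, $\beta_1=\alpha$, $\alpha_2=\beta$, $\beta_2=\beta+\gamma$, but that lemma is stated for $(UD)^n$ rather than a general two-peak target, so a small adaptation of its proof (which only used the peak/non-peak counting of factors) is what I would actually write out.
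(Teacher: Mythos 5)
Your final plan (steps (1)--(3)) is exactly the paper's proof: the occurrence must match runs to corresponding runs because both paths have two peaks, which yields precisely the four inequalities $\alpha+\gamma\leq i+k$, $\alpha\leq i$, $\beta\leq j$, $\beta+\gamma\leq j+k$, and these are then rewritten as the coordinatewise condition defining $\sqsubseteq$. The only blemish is the middle-paragraph detour claiming $\gamma\leq k$ as a necessary condition --- it is not (e.g.\ $U^4DUD^4\leq U^5D^5U^5D^5$ with $\gamma=3$, $k=0$), and neither it nor the ``normal form'' from the previous proposition is needed; your final organization correctly drops both.
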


\begin{proof}
The fact that $U^{\gamma +\alpha}D^{\alpha}U^{\beta}D^{\beta +\gamma}\leq U^{k+i}D^i U^j D^{j+k}$
is equivalent to the system of inequalities
$$
\begin{cases}
\alpha +\gamma \leq i+k\\
\alpha \leq i\\
\beta \leq j\\
\beta +\gamma \leq j+k
\end{cases}
$$
since the two paths have the same number of peaks,
and so the steps of each run of the smaller path must be selected
from the steps of the corresponding run of the larger one.
The above inequalities can be equivalently written as
$(\alpha ,\beta ,\gamma )\leq (i,j,\min \{ i+k-\alpha ,j+k-\beta \}$,
which is exactly $(\alpha ,\beta ;\gamma )\sqsubseteq (i,j;k)$.
\end{proof}

Using the above lemma and bijection,
the pattern order on $[UD,Q_{a,b}^{(0)}]_2$ can now be expressed as a partial order on the set of
squares inside an $a\times b$ rectangular grid.
Denote with $\underline{xy}|$ the rectangle having a pair of opposite corners in positions $(1,1)$ and $(x,y)$.
Take two squares $Q,Q'$ in the grid
whose topmost and leftmost cells are in positions $(\alpha ,\beta )$ and $(i,j)$, respectively,
and whose sides have length $\gamma +1$ and $k+1$;
hence they are encoded by the triples $Q=(\alpha ,\beta ;\gamma )$ and $Q'=(i,j;k)$.
The partial order on $\Theta$ defined in the above lemma can be read off on squares in the following way:
$Q\leq Q'$ when the topmost and leftmost cell of $Q$ is in the rectangle $\underline{ij}|$
and the opposite cell is in the rectangle $\underline{(i+k)(j+k)}|$.

%
%
%
%
%
%

\bigskip

Suppose now $r\in \mathbb{N}$ and $2\leq r\leq a+b$.
We will refine our previous enumerative result by counting the number of elements in $[UD,Q_{a,b}^{(h)}]$ with semilength $r$.
This clearly gives the rank distribution of the elements of $[UD,Q_{a,b}^{(h)}]$. As an example, Figure \ref{fig3} shows the interval $[UD,Q_{2,3}^{(1)}]$.

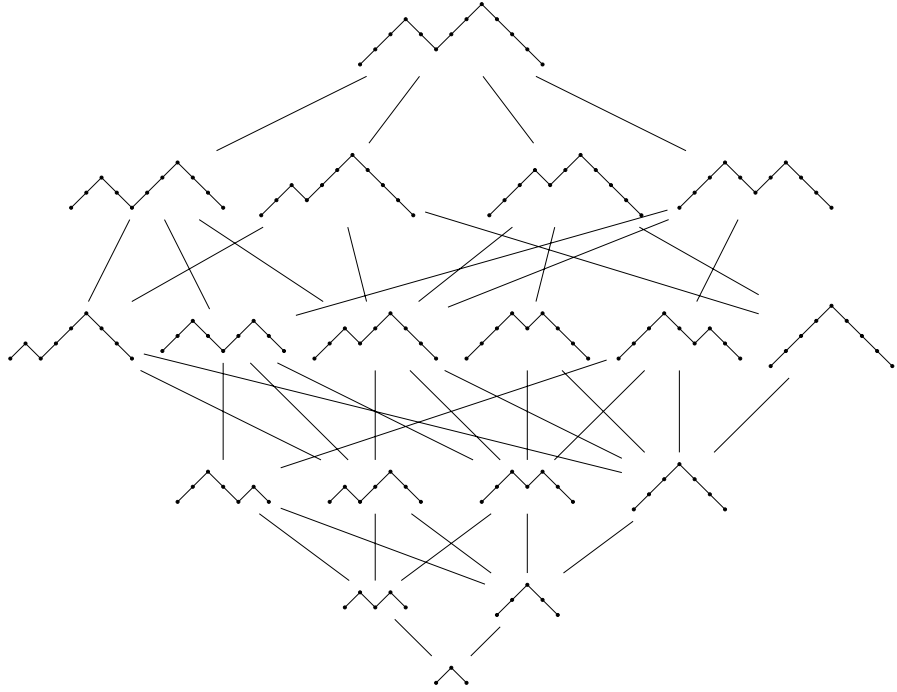
\begin{figure}[h]
	\begin{center}
		{\begin{tikzpicture}
			\node (1) at (0,0)
			{\scalebox{0.2}
				{\begin{tikzpicture}
					\draw  (0,0) --  (1,1) -- (2,0);
					\draw[fill] (0,0) circle[radius=0.1];
					\draw[fill] (1,1) circle[radius=0.1];
					\draw[fill] (2,0) circle[radius=0.1];
					\end{tikzpicture}}};
			\node (2) at (-1,1)
			{\scalebox{0.2}
				{\begin{tikzpicture}
					\draw  (0,0) --  (1,1) -- (2,0) -- (3,1) -- (4,0);
					\draw[fill] (0,0) circle[radius=0.1];
					\draw[fill] (1,1) circle[radius=0.1];
					\draw[fill] (2,0) circle[radius=0.1];
					\draw[fill] (3,1) circle[radius=0.1];
					\draw[fill] (4,0) circle[radius=0.1];			
					\end{tikzpicture}}};
			\node (3) at (1,1)
			{\scalebox{0.2}
				{\begin{tikzpicture}
					\draw  (0,0) --  (2,2) --  (4,0);
					\draw[fill] (0,0) circle[radius=0.1];
					\draw[fill] (1,1) circle[radius=0.1];
					\draw[fill] (2,2) circle[radius=0.1];
					\draw[fill] (3,1) circle[radius=0.1];
					\draw[fill] (4,0) circle[radius=0.1];
					\end{tikzpicture}}};
			\node (4) at (-3,2.5)
			{\scalebox{0.2}
				{\begin{tikzpicture}
					\draw  (0,0) --  (2,2) --  (4,0) -- (5,1) --
					(6,0);
					\draw[fill] (0,0) circle[radius=0.1];
					\draw[fill] (1,1) circle[radius=0.1];
					\draw[fill] (2,2) circle[radius=0.1];
					\draw[fill] (3,1) circle[radius=0.1];
					\draw[fill] (4,0) circle[radius=0.1];
					\draw[fill] (5,1) circle[radius=0.1];
					\draw[fill] (6,0) circle[radius=0.1];
					\end{tikzpicture}}};
			\node (5) at (-1,2.5)
			{\scalebox{0.2}
				{\begin{tikzpicture}
					\draw  (0,0) --  (1,1) --  (2,0) -- (4,2) --
					(6,0);
					\draw[fill] (0,0) circle[radius=0.1];
					\draw[fill] (1,1) circle[radius=0.1];
					\draw[fill] (2,0) circle[radius=0.1];
					\draw[fill] (3,1) circle[radius=0.1];
					\draw[fill] (4,2) circle[radius=0.1];
					\draw[fill] (5,1) circle[radius=0.1];
					\draw[fill] (6,0) circle[radius=0.1];
					\end{tikzpicture}}};
			\node (6) at (1,2.5)
			{\scalebox{0.2}
				{\begin{tikzpicture}
					\draw  (0,0) --  (2,2) --  (3,1) -- (4,2) --
					(6,0);
					\draw[fill] (0,0) circle[radius=0.1];
					\draw[fill] (1,1) circle[radius=0.1];
					\draw[fill] (2,2) circle[radius=0.1];
					\draw[fill] (3,1) circle[radius=0.1];
					\draw[fill] (4,2) circle[radius=0.1];
					\draw[fill] (5,1) circle[radius=0.1];
					\draw[fill] (6,0) circle[radius=0.1];
					\end{tikzpicture}}};
			\node (7) at (3,2.5)
			{\scalebox{0.2}
				{\begin{tikzpicture}
					\draw  (0,0) --  (3,3) -- (6,0);
					\draw[fill] (0,0) circle[radius=0.1];
					\draw[fill] (1,1) circle[radius=0.1];
					\draw[fill] (2,2) circle[radius=0.1];
					\draw[fill] (3,3) circle[radius=0.1];
					\draw[fill] (4,2) circle[radius=0.1];
					\draw[fill] (5,1) circle[radius=0.1];
					\draw[fill] (6,0) circle[radius=0.1];			
					\end{tikzpicture}}};
			\node (8) at (-5,4.5)
			{\scalebox{0.2}
				{\begin{tikzpicture}
					\draw  (0,0) -- (1,1) -- (2,0) -- (5,3) -- (8,0) ;
					\draw[fill] (0,0) circle[radius=0.1];
					\draw[fill] (1,1) circle[radius=0.1];
					\draw[fill] (2,0) circle[radius=0.1];
					\draw[fill] (3,1) circle[radius=0.1];
					\draw[fill] (4,2) circle[radius=0.1];
					\draw[fill] (5,3) circle[radius=0.1];
					\draw[fill] (6,2) circle[radius=0.1];
					\draw[fill] (7,1) circle[radius=0.1];
					\draw[fill] (8,0) circle[radius=0.1];
					\end{tikzpicture}}};
			\node (9) at (-3,4.5)
			{\scalebox{0.2}
				{\begin{tikzpicture}
					\draw  (0,0) -- (2,2) -- (4,0) -- (6,2) -- (8,0);
					\draw[fill] (0,0) circle[radius=0.1];
					\draw[fill] (1,1) circle[radius=0.1];
					\draw[fill] (2,2) circle[radius=0.1];
					\draw[fill] (3,1) circle[radius=0.1];
					\draw[fill] (4,0) circle[radius=0.1];
					\draw[fill] (5,1) circle[radius=0.1];
					\draw[fill] (6,2) circle[radius=0.1];
					\draw[fill] (7,1) circle[radius=0.1];
					\draw[fill] (8,0) circle[radius=0.1];
					\end{tikzpicture}}};
			\node (10) at (-1,4.5)
			{\scalebox{0.2}
				{\begin{tikzpicture}
					\draw  (0,0) -- (2,2) -- (3,1) -- (5,3) -- (8,0);
					\draw[fill] (0,0) circle[radius=0.1];
					\draw[fill] (1,1) circle[radius=0.1];
					\draw[fill] (2,2) circle[radius=0.1];
					\draw[fill] (3,1) circle[radius=0.1];
					\draw[fill] (4,2) circle[radius=0.1];
					\draw[fill] (5,3) circle[radius=0.1];
					\draw[fill] (6,2) circle[radius=0.1];
					\draw[fill] (7,1) circle[radius=0.1];
					\draw[fill] (8,0) circle[radius=0.1];
					\end{tikzpicture}}};
			\node (11) at (1,4.5)
			{\scalebox{0.2}
				{\begin{tikzpicture}
					\draw  (0,0) -- (3,3) -- (4,2) -- (5,3) -- (8,0);
					\draw[fill] (0,0) circle[radius=0.1];
					\draw[fill] (1,1) circle[radius=0.1];
					\draw[fill] (2,2) circle[radius=0.1];
					\draw[fill] (3,3) circle[radius=0.1];
					\draw[fill] (4,2) circle[radius=0.1];
					\draw[fill] (5,3) circle[radius=0.1];
					\draw[fill] (6,2) circle[radius=0.1];
					\draw[fill] (7,1) circle[radius=0.1];
					\draw[fill] (8,0) circle[radius=0.1];			
					\end{tikzpicture}}};
			\node (12) at (3,4.5)
			{\scalebox{0.2}
				{\begin{tikzpicture}
					\draw  (0,0) -- (3,3) -- (5,1) -- (6,2) -- (8,0);
					\draw[fill] (0,0) circle[radius=0.1];
					\draw[fill] (1,1) circle[radius=0.1];
					\draw[fill] (2,2) circle[radius=0.1];
					\draw[fill] (3,3) circle[radius=0.1];
					\draw[fill] (4,2) circle[radius=0.1];
					\draw[fill] (5,1) circle[radius=0.1];
					\draw[fill] (6,2) circle[radius=0.1];
					\draw[fill] (7,1) circle[radius=0.1];
					\draw[fill] (8,0) circle[radius=0.1];			
					\end{tikzpicture}}};
			\node (13) at (5,4.5)
			{\scalebox{0.2}
				{\begin{tikzpicture}
					\draw  (0,0) -- (4,4) -- (8,0);
					\draw[fill] (0,0) circle[radius=0.1];
					\draw[fill] (1,1) circle[radius=0.1];
					\draw[fill] (2,2) circle[radius=0.1];
					\draw[fill] (3,3) circle[radius=0.1];
					\draw[fill] (4,4) circle[radius=0.1];
					\draw[fill] (5,3) circle[radius=0.1];
					\draw[fill] (6,2) circle[radius=0.1];
					\draw[fill] (7,1) circle[radius=0.1];
					\draw[fill] (8,0) circle[radius=0.1];	
					\end{tikzpicture}}};
			\node (14) at (-4,6.5)
			{\scalebox{0.2}
				{\begin{tikzpicture}
					\draw  (0,0) -- (2,2) -- (4,0) -- (7,3) -- (10,0);
					\draw[fill] (0,0) circle[radius=0.1];
					\draw[fill] (1,1) circle[radius=0.1];
					\draw[fill] (2,2) circle[radius=0.1];
					\draw[fill] (3,1) circle[radius=0.1];
					\draw[fill] (4,0) circle[radius=0.1];
					\draw[fill] (5,1) circle[radius=0.1];
					\draw[fill] (6,2) circle[radius=0.1];
					\draw[fill] (7,3) circle[radius=0.1];
					\draw[fill] (8,2) circle[radius=0.1];
					\draw[fill] (9,1) circle[radius=0.1];
					\draw[fill] (10,0) circle[radius=0.1];
					\end{tikzpicture}}};
			\node (15) at (-1.5,6.5)
			{\scalebox{0.2}
				{\begin{tikzpicture}
					\draw  (0,0) -- (2,2) -- (3,1) -- (6,4) -- (10,0);
					\draw[fill] (0,0) circle[radius=0.1];
					\draw[fill] (1,1) circle[radius=0.1];
					\draw[fill] (2,2) circle[radius=0.1];
					\draw[fill] (3,1) circle[radius=0.1];
					\draw[fill] (4,2) circle[radius=0.1];
					\draw[fill] (5,3) circle[radius=0.1];
					\draw[fill] (6,4) circle[radius=0.1];
					\draw[fill] (7,3) circle[radius=0.1];
					\draw[fill] (8,2) circle[radius=0.1];
					\draw[fill] (9,1) circle[radius=0.1];
					\draw[fill] (10,0) circle[radius=0.1];
					\end{tikzpicture}}};
			\node (16) at (1.5,6.5)
			{\scalebox{0.2}
				{\begin{tikzpicture}
					\draw  (0,0) -- (3,3) -- (4,2) -- (6,4) -- (10,0);
					\draw[fill] (0,0) circle[radius=0.1];
					\draw[fill] (1,1) circle[radius=0.1];
					\draw[fill] (2,2) circle[radius=0.1];
					\draw[fill] (3,3) circle[radius=0.1];
					\draw[fill] (4,2) circle[radius=0.1];
					\draw[fill] (5,3) circle[radius=0.1];
					\draw[fill] (6,4) circle[radius=0.1];
					\draw[fill] (7,3) circle[radius=0.1];
					\draw[fill] (8,2) circle[radius=0.1];
					\draw[fill] (9,1) circle[radius=0.1];
					\draw[fill] (10,0) circle[radius=0.1];
					\end{tikzpicture}}};
			\node (17) at (4,6.5)
			{\scalebox{0.2}
				{\begin{tikzpicture}
					\draw  (0,0) -- (3,3) -- (5,1) -- (7,3) -- (10,0);
					\draw[fill] (0,0) circle[radius=0.1];
					\draw[fill] (1,1) circle[radius=0.1];
					\draw[fill] (2,2) circle[radius=0.1];
					\draw[fill] (3,3) circle[radius=0.1];
					\draw[fill] (4,2) circle[radius=0.1];
					\draw[fill] (5,1) circle[radius=0.1];
					\draw[fill] (6,2) circle[radius=0.1];
					\draw[fill] (7,3) circle[radius=0.1];
					\draw[fill] (8,2) circle[radius=0.1];
					\draw[fill] (9,1) circle[radius=0.1];
					\draw[fill] (10,0) circle[radius=0.1];
					\end{tikzpicture}}};
			\node (18) at (0,8.5)
			{\scalebox{0.2}
				{\begin{tikzpicture}
					\draw  (0,0) -- (3,3) -- (5,1) -- (8,4) -- (12,0);
					\draw[fill] (0,0) circle[radius=0.1];
					\draw[fill] (1,1) circle[radius=0.1];
					\draw[fill] (2,2) circle[radius=0.1];
					\draw[fill] (3,3) circle[radius=0.1];
					\draw[fill] (4,2) circle[radius=0.1];
					\draw[fill] (5,1) circle[radius=0.1];
					\draw[fill] (6,2) circle[radius=0.1];
					\draw[fill] (7,3) circle[radius=0.1];
					\draw[fill] (8,4) circle[radius=0.1];
					\draw[fill] (9,3) circle[radius=0.1];
					\draw[fill] (10,2) circle[radius=0.1];
					\draw[fill] (11,1) circle[radius=0.1];
					\draw[fill] (12,0) circle[radius=0.1];
					\end{tikzpicture}}};
			\draw[very thin] (1) -- (2) (1) -- (3) (2)--(4) (2)--(5) (2)--(6) (3)--(4) (3)--(5) (3)--(6) (3)--(7)  (8)--(5) (8)--(7) (9)--(4) (9)--(5) (9)--(6) (10)--(5) (10)--(6) (10)--(7) (11)--(6) (11)--(7) (12)--(4) (12)--(6) (12)--(7) (13)--(7) (14)--(8) (14)--(9) (14)--(10) (15)--(8) (15)--(10) (15)--(13) (16)--(10) (16)--(11) (16)--(13) (17)--(9) (17)--(10) (17)--(12) (18)--(14) (18)--(15) (18)--(16) (18)--(17);
	\end{tikzpicture}}
\end{center}
\caption{The Hasse diagram of the interval $[UD,Q_{2,3}^{(1)}]$ in
the Dyck pattern poset.}
\label{fig3}
\end{figure}

\begin{proposition}
The number of elements of $[UD,Q_{a,b}^{(h)}]$ having semilength $r$ is given by
\begin{equation}\label{rank_2_peaks}
s_0 ^{(r)}[UD,Q_{a,b}^{(h)}] =
\sum_{i=\max \{ 1,r-b-h\} }^{\min \{ a,r-1\} }\left( \min \{ b,r-i\} -\max \{ 1,r-a-h\}+1 \right) + [r\leq b+h]\ \ ,
\end{equation}
where $[\Omega ]$ denotes the characteristic function of the property $\Omega$,
that is $[\Omega ] =1$ if $\Omega$ is true and $[\Omega ] =0$ if $\Omega$ is false.
\end{proposition}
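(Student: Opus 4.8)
The plan is to split the elements of $[UD,Q_{a,b}^{(h)}]$ according to whether they have one or two peaks; since the minimum $UD$ of the poset has one peak, there are no other cases. The one-peak paths are precisely the paths $U^jD^j$ with $1\le j\le b+h$, because (using $b\ge a$) the maximal height of $Q_{a,b}^{(h)}$ equals $b+h$, attained at its second peak, and $U^jD^j\le Q_{a,b}^{(h)}$ exactly when $j\le b+h$ (embed $U^j$ in the first ascending run together with the third one, and $D^j$ in the last descending run). Hence the interval contains exactly one one-peak path of semilength $r$ when $r\le b+h$ and none otherwise, which produces the summand $[r\le b+h]$ in (\ref{rank_2_peaks}).

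For the two-peak paths, I would write such a path as $P=U^{x_1}D^{y_1}U^{x_2}D^{y_2}$ with all four exponents $\ge 1$; then $P$ is a Dyck path precisely when $x_1\ge y_1$ and $x_1+x_2=y_1+y_2$, and in this case its semilength is $r=x_1+x_2$. By the run-by-run argument used in the previous lemma — both $P$ and $Q_{a,b}^{(h)}$ have a single valley, a peak of $P$ must be matched with a peak of $Q_{a,b}^{(h)}$, so the $i$-th run of $P$ must be embedded into the $i$-th run of $Q_{a,b}^{(h)}$ — one obtains that $P\le Q_{a,b}^{(h)}$ if and only if $x_1\le a+h$, $y_1\le a$, $x_2\le b$ and $y_2\le b+h$. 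Next I would reparametrise by $i:=y_1$ and $j:=x_2$, which determine $P$ via $x_1=r-j$ and $y_2=r-i$; translating all the constraints above, a pair $(i,j)$ comes from a two-peak path of semilength $r$ in the interval if and only if
\[
1\le i\le a,\qquad 1\le j\le b,\qquad i+j\le r,\qquad i\ge r-b-h,\qquad j\ge r-a-h,
\]
where $i+j\le r$ encodes $x_1\ge y_1$, the bound $i\ge r-b-h$ encodes $y_2\le b+h$, and $j\ge r-a-h$ encodes $x_1\le a+h$ (the inequalities $i,j\le r-1$ being automatic). Summing first over $j$ with $i$ fixed, $j$ ranges over $[\max\{1,r-a-h\},\min\{b,r-i\}]$, contributing $\min\{b,r-i\}-\max\{1,r-a-h\}+1$, while $i$ ranges over $[\max\{1,r-b-h\},\min\{a,r-1\}]$; adding the one-peak contribution gives exactly (\ref{rank_2_peaks}).

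The one point that deserves a check is that no summand in the resulting expression is negative, i.e. that $\min\{b,r-i\}\ge\max\{1,r-a-h\}$ for every $i$ in the outer range: this follows from $b\ge a\ge 1$, from $i\le r-1$, from $i\le a\le a+h$, and from $r\le a+b+h$ (the semilength of $Q_{a,b}^{(h)}$), so there is no cancellation and the double sum really counts the two-peak paths. I do not anticipate any deeper obstacle; the only genuine work is the careful comparison of the index ranges obtained above with those displayed in the statement, which is pure bookkeeping with the $\min$/$\max$ expressions.
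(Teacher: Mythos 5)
Your proof is correct and follows essentially the same route as the paper: separate the single one-peak path per semilength $r\le b+h$, reduce the two-peak case to run-by-run embedding constraints, reparametrise by the lengths of the two inner runs, and evaluate the resulting double sum. Your parametrisation $(i,j)=(y_1,x_2)$ is just a relabelling of the paper's $(i,j,k)$ with $k=r-i-j$, and your added check that the summands are nonnegative is a welcome (if minor) extra.
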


\begin{proof}
We can clearly limit ourselves to considering paths having two peaks,
since paths with just one peak inside $[UD,Q_{a,b}^{(h)}]$ are easily counted
(we have exactly one such path for any rank $r\leq b+h$).

Using an approach similar to that of Proposition \ref{prop3},
we observe that a generic Dyck path $P=U^{k+i}D^iU^jD^{j+k}\in [UD,Q_{a,b}^{(h)}]_2$ of semilength $r$
can be constructed as follows.
Start by choosing $k+i$ up steps from the first run of $Q_{a,b}^{(h)}$, with $k+i\leq a+h$.
Then choose $i$ down steps from $Q_{a,b}^{(h)}$ for the second run of $P$, so that $1\leq i \leq a$,
since in the second run of $Q_{a,b}^{(h)}$ there are exactly $a$ down steps.
For the third run of $P$, it must be $1\leq j \leq b$ (since the third run of $Q_{a,b}^{(h)}$ has length $b$)
and $j+k\leq b+h$
(since the height of $P$ at the end of the third run cannot exceed $b+h$,
which is the maximum possible length of the fourth run of $P$).
Finally, we have $i+j+k=r$, so that the following relations hold:

$$
\begin{cases}
i+k \leq a+h\\
1\leq i \leq a\\
1\leq j\leq b\\
j+k\leq b+h\\
i+j+k=r\ .
\end{cases}
$$

From the last relation, we have $i+k=r-j$ and $j+k=r-i$,
so that $i\leq r-j$ and $j\leq r-i$.
Therefore, after some manipulations, the above inequalities can be equivalently written as follows,
in order to make as explicit as possible the ranges of $i$ and $j$
(note that once $i$ and $j$ are fixed, the term $k$ is uniquely determined):

$$
\begin{cases}
j \geq r-a-h\\
1\leq i \leq a\\
1\leq j\leq b\\
i\geq r-b-h\\
i\leq r-j\leq r-1\\
j\leq r-i\\
k=r-i-j\ .
\end{cases}
$$

These inequalities can be condensed in

$$
\begin{cases}
	\max\{1,r-b-h\} \leq i \leq \min\{a,r-1\}\\
	\max\{1,r-a-h\} \leq j \leq \min\{b,r-i\}\\
	k=r-i-j.
\end{cases}
$$

The above computations immediately lead to a formula for the rank distribution of the elements of $[UD,Q_{a,b}^{(h)}]$:

\begin{align*}
s_0 ^{(r)}[UD,Q_{a,b}^{(h)}] &=
[r\leq b+h] + \sum_{i=\max \{ 1,r-b-h\} }^{\min \{a,r-1\} }\ \
\sum_{j=\max\{1,r-a-h\}}^{\max\{b,r-i\}} 1\\
&\\
&=\sum_{i=\max \{ 1,r-b-h)\} }^{\min \{ a,r-1\} }\left( \min \{ b,r-i\} -\max \{ 1,r-a-h\}+1 \right) + [r\leq b+h]\ \ .
\end{align*}

\end{proof}

Formula \ref{rank_2_peaks} is not very easy to read as it is written.
However, if we assign specific values to $a,b$ or $h$, some much nicer expressions can be obtained.
For instance, in the particular case $h=0$, depending on the value of the semilength $r$, we get what follows.

\begin{itemize}
	\item If $2\leq r\leq a+1$, then $\min\{a,r-1\}=r-1$, $\max\{1,r-b\}=1$, $\min\{r-i,b\}=r-i$, and $\max\{1,r-a\}=1$. Therefore
	$$
	s_0 ^{(r)}[UD,Q_{a,b}^{(0)}]_2=
	\sum_{i=1}^{r-1}(r-i)=\sum_{i=1}^{r-1}i={r\choose 2}.
	$$
	\item If $a+1\leq r\leq b$, then $\min\{a,r-1\}=a$, $\max\{1,r-b\}=1$, $\min\{r-i,b\}=r-i$, and $\max\{1,r-a\}=r-a$. Therefore
	$$
	s_0 ^{(r)}[UD,Q_{a,b}^{(0)}]_2=
	\sum_{i=1}^a(r-i-r+a+1)=\sum_{i=1}^a(a+1-i)=
	\sum_{i=1}^a i={a+1\choose 2}.
	$$
	\item If $b+1\leq r \leq a+b$, then $\min\{a,r-1\}=a$, $\max\{1,r-b\}=r-b$, $\min\{r-i,b\}=r-i$, and $\max\{1,r-a\}=r-a$. Therefore
	$$
	s_0 ^{(r)}[UD,Q_{a,b}^{(0)}]_2=
	\sum_{i=r-b}^a(r-i-r+a+1)=\sum_{i=r-b}^a(a+1-i)=
	\sum_{i=1}^{a+b-r+1} i={a+b-r+2\choose 2}.
	$$
\end{itemize}

%
%

Setting $m=\min\{r-1,a,a+b-r+1\}$, it is not difficult to show that $2\leq r\leq a+1$ ($a+1\leq r\leq b$, $b+1\leq r\leq a+b$, respectively) if and only if $m=r-1$ ($m=a$, $m=a+b-r+1$, respectively).
As a consequence
$$
s_0 ^{(r)}[UD,Q_{a,b}^{(0)}]_2=
\sum_{i=1}^m i={m+1 \choose 2},
$$
hence
$$
s_0 ^{(r)}[UD,Q_{a,b}^{(0)}]={m+1 \choose 2}+[r\leq b].
$$

\subsection{Enumeration of covering relations}

In this section we work out a formula for the number $s_{1}([UD,Q_{a,b}^{(0)}])$ of edges in the Hasse diagram of the interval $[UD,Q_{a,b}^{(0)}]$;
as usual, we can assume w.l.o.g. that $1\leq a\leq b$.
For this purpose, we will use the following lemma.

\begin{lemma}\label{delta} Let $(i,j,k)\in \mathbb{N}^{3}$.
Recall that we denote with $\Delta (Q_{i,j}^{(h)})$ the number of paths covered by $Q_{i,j}^{(h)}$.
Then
\begin{equation}\label{rank}
\Delta (Q_{i,j}^{(h)})=\begin{cases}
1 & (i,j,k)\in \{(1,1,0)
\}\\
2 & (i,j,k)\in \{(i,1,0),(1,j,0),(1,1,k)\, |\, i,j\geq 2,\ k\geq 1\}\\
3 & (i,j,k)\in \{(i,j,0),(i,1,k),(1,j,k)\, |\, i,j\geq 2,\ k\geq 1\}\\
4 & (i,j,k)\in\{(i,j,k)\, |\,  i,j\geq 2,\ k\geq 1\}\end{cases}
\end{equation}
\end{lemma}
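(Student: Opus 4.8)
The plan is to use the fact, recalled in the Introduction, that the Dyck pattern poset is graded by semilength. Hence the paths covered by $Q_{i,j}^{(k)}=U^{k+i}D^iU^jD^{j+k}$ are precisely the Dyck paths of semilength $i+j+k-1$ lying weakly below it, that is, the distinct Dyck words obtained from the word $U^{k+i}D^iU^jD^{j+k}$ by deleting one $U$ (necessarily from the first or the third run) together with one $D$ (necessarily from the second or the fourth run) in such a way that the result is again a Dyck word. Since all letters inside a given run are interchangeable, this leaves at most four candidates, which in the triple notation $(i,j;k)$ of the previous subsection read
$$A=Q_{i-1,j}^{(k)},\qquad B=Q_{i,j}^{(k-1)},\qquad C=Q_{i-1,j-1}^{(k+1)},\qquad D=Q_{i,j-1}^{(k)},$$
coming respectively from deleting in (run $1$, run $2$), (run $1$, run $4$), (run $3$, run $2$), (run $3$, run $4$). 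First I would check legality: a direct look at the heights shows that $A$, $C$ and $D$ are always Dyck words, whereas $B$ is a Dyck word exactly when $k\geq1$ (for $k=0$ the prefix $U^{i-1}D^i$ already drops below the axis).

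Next I would clear up degeneracies and coincidences. A candidate has just one peak precisely when one of its three parameters becomes $0$: the path $A$ degenerates exactly when $i=1$, and then $A=U^{j+k}D^{j+k}$; the path $C$ degenerates when $i=1$ (again giving $U^{j+k}D^{j+k}$) or when $j=1$ (giving $U^{i+k}D^{i+k}$); and the path $D$ degenerates when $j=1$, giving $U^{i+k}D^{i+k}$. Thus $A=C$ whenever $i=1$ and $C=D$ whenever $j=1$. On the other hand, when $i,j\geq2$ none of the four degenerates, and their run-length vectors
$$(k+i-1,\,i-1,\,j,\,j+k),\qquad (k+i-1,\,i,\,j,\,j+k-1),$$
$$(k+i,\,i-1,\,j-1,\,j+k),\qquad (k+i,\,i,\,j-1,\,j+k-1)$$
(for $A,B$ and for $C,D$ respectively) are pairwise distinct: $A,B$ differ from $C,D$ in the first coordinate, while $A\neq B$ and $C\neq D$ by the second. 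Finally, a degenerate one-peak candidate can never equal a genuine two-peak one, simply by the number of peaks.

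It then remains to assemble the count in the four exhaustive cases according to which of $i=1$, $j=1$ hold, each split further by $k=0$ or $k\geq1$. If $i=j=1$, then $A=C=D=U^{k+1}D^{k+1}$ while $B$ is present only for $k\geq1$, so $\Delta(Q_{i,j}^{(k)})$ equals $1$ when $k=0$ and $2$ when $k\geq1$. If $i\geq2$ and $j=1$, then $C=D=U^{k+i}D^{k+i}$ is one path, $A$ is a distinct two-peak path, and $B$ supplies a further distinct two-peak path when $k\geq1$, so $\Delta(Q_{i,j}^{(k)})$ equals $2$ when $k=0$ and $3$ when $k\geq1$; the case $i=1,\ j\geq2$ is symmetric, because reversing words (reversing the string and interchanging $U$ and $D$) is an automorphism of the poset that sends $Q_{i,j}^{(k)}$ to $Q_{j,i}^{(k)}$. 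If $i,j\geq2$, then $A,C,D$ are three distinct paths and $B$ a fourth one when $k\geq1$, so $\Delta(Q_{i,j}^{(k)})$ equals $3$ when $k=0$ and $4$ when $k\geq1$. These values coincide with the four cases in the statement.

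The routine parts are the height checks for legality and the symmetry remark; the point requiring care is the bookkeeping of the degenerate one-peak candidates and of the coincidences $A=C$ (when $i=1$) and $C=D$ (when $j=1$), so that no covered path is counted twice or overlooked. Once the run-length-vector criterion for distinctness is in place, what remains is a finite case analysis.
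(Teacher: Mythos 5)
Your proof is correct. It differs from the paper's in that the paper disposes of the case analysis in one line by citing Proposition~2.1 of the reference [BBFGPW] (which describes, for a general Dyck path, the set of paths it covers), whereas you carry out the computation from scratch: you use gradedness by semilength to reduce the problem to deletions of one $U$ and one $D$ from the four runs, identify the four candidates $Q_{i-1,j}^{(k)}$, $Q_{i,j}^{(k-1)}$, $Q_{i-1,j-1}^{(k+1)}$, $Q_{i,j-1}^{(k)}$, check which are legal Dyck words (only the second fails, and only when $k=0$), and then track degeneracies to one peak and coincidences via run-length vectors. Your bookkeeping is accurate in all four cases, including the coincidences $A=C$ when $i=1$ and $C=D$ when $j=1$, and the values match the statement. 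What your approach buys is a self-contained and verifiable argument that does not depend on the external reference; what the paper's approach buys is brevity, at the cost of leaving the reader to unwind the cited proposition. One small point worth making explicit if you write this up: a path of semilength one less that is a pattern of $Q_{i,j}^{(k)}$ is necessarily obtained by deleting exactly one $U$ and one $D$ (since both words are balanced), which is what justifies restricting to your four candidates.
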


\begin{proof}
A Dyck path having two peaks can cover at most 4 paths,
obtained by removing one up step from one of the two ascending runs and one down step from one of the two descending runs.
However, in some cases different choices can lead to the same path.
Using Proposition 2.1 of \cite{BBFGPW}, we get exactly the formula given in the statement.
\end{proof}


The above lemma, together with the following formula
\begin{equation}\label{cover_count}
s_{1}([UD,Q_{a,b}^{(0)}])=\sum_{n\geq 0}n\cdot \Delta_{n}([UD,Q_{a,b}^{(0)}]),
\end{equation}
where $\Delta_n ([UD,Q_{a,b}^{(0)}])=\{ P\in [UD,Q_{a,b}^{(0)}]\, |\, \Delta (P)=n\}$, allows to find the desired enumeration.

Clearly we have $\Delta(P)=1$ if and only if either $P=(UD)^{2}$ or $P= U^{i}D^{i}$, for some $i\in \{2,...,b\}$,
hence $\Delta_{1}([UD,Q_{a,b}^{(0)}])=1+(b-1)=b$.

Now take $(i,j,k)\in \mathbb{N}^{3}$, with $i,j\geq 2$ and $k\geq 1$.
Recalling relations (\ref{rank}), we are able to compute $\Delta_{n}([UD,Q_{a,b}^{(0)}])$ for $n=2,3,4$.

When $n=2$, we have that
$$
\begin{cases}
Q_{i,1}^{(0)}\leq Q_{a,b}^{(0)} \textnormal{ if and only if } 2\leq i\leq a;\\
Q_{1,j}^{(0)}\leq Q_{a,b}^{(0)} \textnormal{ if and only if } 2\leq j\leq b;\\
Q_{1,1}^{(k)}\leq Q_{a,b}^{(0)} \textnormal{ if and only if } 1\leq k\leq a-1.\\
\end{cases}
$$

Therefore it follows from Lemma \ref{delta} that $\Delta_{2}([UD,Q_{a,b}^{(0)}])=(a-1)+(b-1)+(a-1)=2a+b-3$.

When $n=3$, we have that
$$
\begin{cases}
Q_{i,j}^{(0)}\leq Q_{a,b}^{(0)} \textnormal{ if and only if } 2\leq i\leq a,\, 2\leq j\leq b;\\
Q_{i,1}^{(k)}\leq Q_{a,b}^{(0)} \textnormal{ if and only if } 2\leq i\leq a-k,\, 1\leq k\leq a-1;\\
Q_{1,j}^{(k)}\leq Q_{a,b}^{(0)} \textnormal{ if and only if } 2\leq j\leq b-k,\, 1\leq k\leq a-1.\\
\end{cases}
$$

Again using Lemma \ref{delta} and some standard computations, we get
\begin{align*}
\Delta_{3}([UD,Q_{a,b}^{(0)}]) & =(a-1)(b-1)+\sum_{k=1}^{a-1}\left( (a-k-1)+(b-k-1)\right) \\
                        & =(a-1)(b-1)+(a+b-2)(a-1)-a(a-1)\\
                        & =(a-1)(2b-3).
\end{align*}

Finally, when $n=4$, we have that $Q_{i,j}^{(k)}\leq Q_{a,b}^{(0)}$ if and only if $1\leq k\leq a-1$, $2\leq i\leq a-k$ and $2\leq j\leq b-k$;
thus we obtain
\begin{align*}
\Delta_{4}([UD,Q_{a,b}^{(0)}]) & =\sum_{k=1}^{a-1}(a-k-1)(b-k-1)\\
                        & =\sum_{k=1}^{a-1}\left( (a-1)(b-1)-(a+b-2)k+k^{2}\right) \\
                        & =(a-1)^{2}(b-1)-(a+b-2)\binom{a}{2}+\frac{a(a-1)(2a-1)}{6}.\\
\end{align*}

Using formula (\ref{cover_count}), we thus have the following.

\begin{proposition}
The number of covering relation in the interval $[UD,Q_{a,b}^{(0)}]$ is given by
\begin{equation}
s_{1}([UD,P(a,b)])=-\frac{1}{3}(2a^{3}-6a^{2}b+a-3b+3).
\end{equation}
\end{proposition}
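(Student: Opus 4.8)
The plan is to feed the values of $\Delta_n([UD,Q_{a,b}^{(0)}])$ computed above into the identity (\ref{cover_count}). By Lemma~\ref{delta}, a Dyck path with two peaks covers at most four paths, and the minimum $UD$ covers none, so in $s_1([UD,Q_{a,b}^{(0)}])=\sum_{n\ge 0}n\cdot\Delta_n([UD,Q_{a,b}^{(0)}])$ only the terms with $n\in\{1,2,3,4\}$ can be nonzero, the $n=0$ term contributing $0$ in any case. Writing $\Delta_n$ for $\Delta_n([UD,Q_{a,b}^{(0)}])$, we therefore have
$$
s_1([UD,Q_{a,b}^{(0)}])=\Delta_1+2\Delta_2+3\Delta_3+4\Delta_4 .
$$

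First I would substitute the already-established values $\Delta_1=b$, $\Delta_2=2a+b-3$ and $\Delta_3=(a-1)(2b-3)$; after collecting terms, these three summands contribute $6ab-5a-3b+3$. Next I would expand the $n=4$ term, starting from
$$
\Delta_4=(a-1)^2(b-1)-(a+b-2)\binom{a}{2}+\frac{a(a-1)(2a-1)}{6},
$$
multiplying out each piece as a polynomial in $a,b$ and then by $4$, which gives $4\Delta_4=\tfrac{1}{3}\bigl(-2a^3+6a^2b-18ab+14a+12b-12\bigr)$. Adding $6ab-5a-3b+3$ and clearing denominators leaves $3\,s_1([UD,Q_{a,b}^{(0)}])=-2a^3+6a^2b-a+3b-3$, i.e. $s_1([UD,Q_{a,b}^{(0)}])=-\tfrac{1}{3}(2a^3-6a^2b+a-3b+3)$, as claimed.

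The computation is routine polynomial algebra, so I expect no conceptual obstacle; the only point worth checking explicitly is the boundary behaviour at $a=1$, where several of the index ranges used to obtain $\Delta_2,\Delta_3,\Delta_4$ (such as $2\le i\le a$ or $1\le k\le a-1$) are empty. Since the corresponding sums then vanish and the polynomial expressions for $\Delta_2,\Delta_3,\Delta_4$ remain valid in this case, the final formula holds for all $1\le a\le b$ without a separate degenerate argument. As a sanity check, for $a=b=1$ the interval $[UD,(UD)^2]$ has a single covering relation and the formula returns $-\tfrac13(2-6+1-3+3)=1$.
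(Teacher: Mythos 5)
Your proposal is correct and follows exactly the paper's route: the paper likewise establishes $\Delta_1=b$, $\Delta_2=2a+b-3$, $\Delta_3=(a-1)(2b-3)$ and the stated $\Delta_4$, and then obtains the result by substituting into $s_1=\sum_n n\cdot\Delta_n$. Your algebra checks out (including the values $4\Delta_4=\tfrac{1}{3}(-2a^3+6a^2b-18ab+14a+12b-12)$ and the boundary case $a=1$), so there is nothing to add.
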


\subsection{The M\"obius function}

To conclude our analysis of the intervals $[UD,Q_{a,b}^{(h)}]$ we now completely determine their M\"obius function.
Note that the M\"obius function of intervals of the form $[UD,U^n D^n ]$ is trivial 
since these intervals consist of a single chain:
\begin{itemize}
\item if $n=1$, the M\"obius function is $1$;
\item if $n=2$, the M\"obius function is $-1$;
\item otherwise, the M\"obius function is $0$.
\end{itemize}

The next proposition shows that $\mu (UD,Q_{a,b}^{(h)})$ is almost always 0.
As usual, we assume that $a\leq b$.

\begin{proposition}
If at least one among $h$ and $b-a$ is strictly bigger than 1, then $\mu (UD,Q_{a,b}^{(h)})=0$.
\end{proposition}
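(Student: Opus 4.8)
The plan is to compute $\mu(UD, Q_{a,b}^{(h)})$ via the recursive definition by analyzing the structure of the interval $[UD, Q_{a,b}^{(h)}]$ below the maximum, splitting into the one-peak paths and the two-peak paths, and showing that the alternating sums telescope to zero under the stated hypothesis. Throughout I use the encoding of a two-peak path $U^{\gamma+\alpha}D^\alpha U^\beta D^{\beta+\gamma}$ in $[UD,Q_{a,b}^{(0)}]_2$ as a triple $(\alpha,\beta;\gamma)$, and the description of the order from the Lemma preceding this proposition, together with the decomposition $\varphi_h(a,b) = \varphi_0(a,b) + hab$ and, more usefully, the bijective description $\mathcal{C}_i \cong \mathcal{C}_0$ showing that every path in $[UD,Q_{a,b}^{(h)}]_2 \setminus [UD,Q_{a,b}^{(0)}]_2$ has the form $U^i P D^i$ with $P$ a two-peak path and $1\le i\le h$.

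First I would dispose of the one-peak contribution. The paths $U^jD^j$ for $1\le j\le b+h$ form a chain $UD < U^2D^2 < \cdots$, and for $j\ge 3$ the sub-poset $[UD, U^jD^j]$ is itself a chain, so $\mu(UD, U^jD^j)=0$ for $j\ge 3$, while $\mu(UD,UD)=1$ and $\mu(UD,U^2D^2)=-1$. Hence in the sum $-\sum_{z < Q_{a,b}^{(h)}} \mu(UD,z)$ the total contribution of all one-peak paths $z$ is $1 + (-1) + 0 + \cdots + 0 = 0$, and these paths may be ignored entirely. So $\mu(UD, Q_{a,b}^{(h)}) = -\sum_{z}\mu(UD,z)$, the sum now running over two-peak paths $z$ with $z < Q_{a,b}^{(h)}$. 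The heart of the argument is therefore to understand $\mu(UD, z)$ for a two-peak path $z$, and to sum these values over the two-peak order ideal generated by $Q_{a,b}^{(h)}$ minus its top. Here I would prove an auxiliary claim computing $\mu(UD, Q_{i,j}^{(\ell)})$ for all two-peak paths: using the four-way case analysis of $\Delta$ from Lemma~\ref{delta} (the number of elements covered) together with a recursive peeling of the one-peak paths below, one expects $\mu(UD, Q_{i,j}^{(\ell)})$ to be supported only on the "small" cases — concretely, nonzero only when $(i,j,\ell)$ lies in a bounded region (roughly $i,j \le 2$ and $\ell \le 1$), with explicit small values there. This matches the intuition that the Möbius function of the whole Dyck pattern poset is concentrated near the bottom.

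Granting such a pointwise description, the final step is the summation. Using the stratification $[UD,Q_{a,b}^{(h)}]_2 = [UD,Q_{a,b}^{(0)}]_2 \sqcup \mathcal{C}_1 \sqcup \cdots \sqcup \mathcal{C}_h$ and the bijections $\mathcal{C}_i \cong \mathcal{C}_0$, I would observe that $z\in \mathcal{C}_i$ means $z = U^i P D^i$ with $P \le Q_{a,b}^{(0)}$, and that the interval $[UD, z]$ differs from $[UD, P]$ only by the extra elevation, which does not change $\mu$ in the relevant range (the elevation $U^i(\cdot)D^i$ induces a rank-shifting embedding of order ideals). One then shows: (i) within $[UD, Q_{a,b}^{(0)}]_2$, the two-peak paths $z$ with nonzero $\mu(UD,z)$, when $b - a \ge 2$, contribute an alternating sum over a "staircase" of strips (governed by $\min\{i-1, a, a+b-i+1\}$ appearing in the rank distribution formula \eqref{rank_2_peaks} and the cases following it) that cancels because the hypothesis $b-a\ge 2$ guarantees at least two consecutive ranks realize the same stable count; (ii) when $h\ge 2$, the extra classes $\mathcal{C}_1,\dots,\mathcal{C}_h$ come in pairs whose $\mu$-contributions are equal and opposite, or more simply each $\mathcal{C}_i$ with $i\ge 1$ contributes $0$ because elevating pushes every path out of the small region where $\mu$ is nonzero. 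Either way the grand total is $0$.

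The main obstacle I anticipate is step two — pinning down $\mu(UD, Q_{i,j}^{(\ell)})$ for all two-peak paths — because the order ideal below a two-peak path is genuinely two-dimensional (it contains both one-peak and two-peak paths, and the two-peak part is the "squares in a grid" poset of the earlier Lemma), so the recursion does not collapse to a single chain and one must actually carry out the induction on $i+j+\ell$ using all four cases of Lemma~\ref{delta}. Once that local computation is in hand, the global cancellation under $h \ge 2$ or $b-a\ge 2$ should follow from a direct, if slightly fiddly, rearrangement of the resulting finite sum; I would organize it by first treating $h\ge 2$ with $a,b$ arbitrary (the elevation argument), then $h\in\{0,1\}$ with $b-a\ge 2$ (the staircase-cancellation argument), so that the two hypotheses are handled by genuinely different mechanisms, consistent with the "at least one of" phrasing of the statement.
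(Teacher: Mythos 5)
Your reduction to two-peak paths is fine (the one-peak chain contributes $1-1+0+\cdots=0$), but the core of your plan rests on a pointwise description of $\mu(UD,Q_{i,j}^{(\ell)})$ that is false. The set of two-peak paths with nonzero M\"obius value is not a bounded region: as the proposition immediately following this one shows, $\mu(UD,Q_{a,a}^{(0)})=-2$ and $\mu(UD,Q_{a,a}^{(1)})=2$ for every $a\geq 2$, and $\mu(UD,Q_{a,a+1}^{(0)})=1$, $\mu(UD,Q_{a,a+1}^{(1)})=-1$ for every $a\geq 1$. The support is the infinite ``thin'' family $h\leq 1$, $b-a\leq 1$, not a neighbourhood of the bottom. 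This also breaks your proposed mechanism for $h\geq 2$: elevating by $i=1$ does \emph{not} push a path out of the nonzero region (e.g.\ $Q_{a,a}^{(1)}=UQ_{a,a}^{(0)}D$ has $\mu=2$), so the classes $\mathcal{C}_i$ do not individually contribute $0$. With the correct support your summation strategy could in principle still be carried out, but it would require first proving the exact nonzero values (a simultaneous induction you have not set up), and then verifying a genuinely fiddly cancellation such as $-1+\sum_{i=2}^{a}(-2)+\sum(+1)+\cdots=0$ for each shape of interval.

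The paper's proof sidesteps all pointwise computation. It proceeds by induction on the semilength $r=a+b+h$ and identifies the unique \emph{longest} path $Q_{i,j}^{(k)}<Q_{a,b}^{(h)}$ with $k,j-i\leq 1$; the key structural fact is that \emph{every} $Z<Q_{a,b}^{(h)}$ failing the hypothesis of the proposition satisfies $Z\leq Q_{i,j}^{(k)}$. Splitting
$\mu(UD,Q_{a,b}^{(h)})=-\sum_{Z\leq Q_{i,j}^{(k)}}\mu(UD,Z)-\sum_{Z<Q_{a,b}^{(h)},\,Z\nleq Q_{i,j}^{(k)}}\mu(UD,Z)$,
the first sum vanishes by the defining recurrence of $\mu$ applied to the full interval $[UD,Q_{i,j}^{(k)}]$, and every term of the second vanishes by the inductive hypothesis (or because it is a long one-peak chain). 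That single comparability observation is the missing idea in your proposal; without it, you are forced into the global bookkeeping that your sketch does not complete.
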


\begin{proof}
We use induction on the semilength $r=a+b+h$ of $Q_{a,b}^{(h)}$.

It is easy to see that, if the hypothesis of the proposition is satisfied, then $r\geq 4$.
Moreover, if $r=4$, then either $a=b=1$ and $h=2$ or $a=1$, $b=3$ and $h=0$.
In such cases, the maximum of $\mu (UD,Q_{a,b}^{(h)})$ is either $U^3 DUD^3$ or $UDU^3 D^3$, respectively.
and it is immediate to verify that $\mu (UD,U^3 DUD^3)=\mu (UD,UDU^3 D^3)=0$.

Now suppose that $r>4$.
Consider the longest Dyck path $Q_{i,j}^{(k)}<Q_{a,b}^{(h)}$ such that $k,j-i\leq 1$.
Such a path can be explicitly described as follows:
\begin{itemize}
\item if $h\leq 1$, then necessarily $a+1<b$, so we set $k=h$, $i=a$ and $j=a+1$;
\item if $h>1$ and $a=b$, then we set $k=1$ and $i=j=a$;
\item if $h>1$ and $a<b$, then we set $k=1$, $i=a$ and $j=a+1$.
\end{itemize}

From the above construction, it is clear that $Q_{i,j}^{(k)}$ is the longest path in the interval $[UD,Q_{a,b}^{(h)}]$
for which the hypothesis of the proposition does not hold.
In other words, if $Z<Q_{a,b}^{(h)}$ is a Dyck path which does not satisfies the hypothesis of the proposition,
i.e. such that the absolute value of the difference of the lengths of every pair of consecutive runs is $\leq 1$,
then $Z\leq Q_{i,j}^{(k)}$.

Now take a path $Z\nleq Q_{i,j}^{(k)}$, $Z\neq Q_{a,b}^{(h)}$.
By the inductive hypothesis, we then have that $\mu (UD,Z)=0$.
So we can now compute the M\"obius function of $[UD,Q_{a,b}^{(h)}]$ as follows:
$$
\mu (UD,Q_{a,b}^{(h)})=-\sum_{Z\leq Q_{i,j}^{(k)}}\mu (UD,Z)-
\sum_{Z<Q_{a,b}^{(h)}\atop Z\nleq Q_{i,j}^{(k)}}\mu (UD,Z).
$$

In the r.h.s, the first sum is 0 by definition of M\"obius function,
and the second sum is 0 as well, since each of its summand is 0.
This concludes the proof.
\end{proof}

Now, to conclude the computation of the M\"obius function when the maximum of the interval has exactly two peaks,
we have to analyze a few remaining cases.

\begin{proposition}
If $h,b-a\leq 1$, then we are in one of the following cases:
\begin{itemize}
\item $\mu (UD,Q_{a,a+1}^{(1)})=-1$ (with $a\geq 1$);
\item $\mu (UD,Q_{a,a+1}^{(0)})=1$ (with $a\geq 1$);
\item $\mu (UD,Q_{a,a}^{(0)})=-2$, if $a\geq 2$, moreover, $\mu (UD,Q_{1,1}^{(0)})=-1$;
\item $\mu (UD,Q_{a,a}^{(1)})=2$, if $a\geq 2$, moreover, $\mu (UD,Q_{1,1}^{(1)})=1$.
\end{itemize}
\end{proposition}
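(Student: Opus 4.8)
The plan is to prove all four identities simultaneously by strong induction on the semilength $r=a+b+h$ of the maximum of the interval, via the defining recursion
$$\mu(UD,Q_{a,b}^{(h)})=-\sum_{Z<Q_{a,b}^{(h)}}\mu(UD,Z),$$
where $Z$ ranges over $[UD,Q_{a,b}^{(h)})$. The previous proposition is what makes this tractable: the only Dyck paths $Z$ with $\mu(UD,Z)\neq 0$ are $UD$ (with $\mu=1$), $U^{2}D^{2}$ (with $\mu=-1$), and the two-peak paths $Q_{i,j}^{(k)}$ with $k\leq 1$ and $|i-j|\leq 1$; here $i,j$ need not be ordered, since the involution of the Dyck pattern poset that reads a Dyck word from right to left while exchanging $U$ and $D$ is an order-preserving bijection fixing $UD$ and sending $Q_{i,j}^{(k)}$ to $Q_{j,i}^{(k)}$, so $\mu(UD,Q_{i,j}^{(k)})=\mu(UD,Q_{j,i}^{(k)})$. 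By the inductive hypothesis each such $Q_{i,j}^{(k)}$ already has a Möbius value in $\{1,-1,2,-2\}$, so the computation reduces to: (i) listing exactly which of these special paths lie strictly below $Q_{a,b}^{(h)}$, and (ii) summing their contributions together with those of $UD$ and $U^{2}D^{2}$.

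For step (i) I would use the run-by-run comparison valid for two paths each having two peaks (the content of the Lemma above, and immediate because an occurrence of $U^{\alpha_{1}}D^{\beta_{1}}U^{\alpha_{2}}D^{\beta_{2}}$ inside $U^{\gamma_{1}}D^{\delta_{1}}U^{\gamma_{2}}D^{\delta_{2}}$ forces $\alpha_{i}\leq\gamma_{i}$ and $\beta_{i}\leq\delta_{i}$). Thus $Q_{i,j}^{(k)}\leq Q_{a,b}^{(h)}$ iff $i+k\leq a+h$, $i\leq a$, $j\leq b$, $j+k\leq b+h$, while $U^{n}D^{n}\leq Q_{a,b}^{(h)}$ iff $n\leq\max\{a,b\}+h$. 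Imposing these inequalities in each of the four shapes $(a,b,h)\in\{(a,a+1,1),(a,a+1,0),(a,a,0),(a,a,1)\}$, and separating the cases $k=0$ and $k=1$, one finds that the admissible two-peak paths below $Q_{a,b}^{(h)}$ fall into a diagonal family $Q_{p,p}^{(k)}$ and two off-diagonal families $Q_{p,p+1}^{(k)}$ and $Q_{p+1,p}^{(k)}$, where in each of the resulting sublists $p$ runs over an explicit initial segment $\{1,\dots,a\}$, $\{1,\dots,a-1\}$, or $\{1,\dots,a-2\}$, the precise endpoint depending on $k$ and on the shape; one then deletes $Q_{a,b}^{(h)}$ itself, which always sits at the top of exactly one of these lists.

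For step (ii) I substitute the inductive values $\mu(UD,Q_{p,p}^{(0)})=-2$ for $p\geq 2$ and $\mu(UD,Q_{1,1}^{(0)})=-1$; $\mu(UD,Q_{p,p}^{(1)})=2$ for $p\geq 2$ and $\mu(UD,Q_{1,1}^{(1)})=1$; $\mu(UD,Q_{p,p+1}^{(0)})=\mu(UD,Q_{p+1,p}^{(0)})=1$; and $\mu(UD,Q_{p,p+1}^{(1)})=\mu(UD,Q_{p+1,p}^{(1)})=-1$. Then $\sum_{Z<Q_{a,b}^{(h)}}\mu(UD,Z)$ becomes a short combination of arithmetic progressions in $a$; in each of the four shapes, listed in the order above, all terms linear in $a$ cancel and one is left with the constant $1$, $-1$, $2$, $-2$ respectively, so negating yields the asserted values $-1,1,-2,2$. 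The base of the induction consists of the smallest members $Q_{1,1}^{(0)}=UDUD$ and $Q_{1,1}^{(1)}=U^{2}DUD^{2}$ (and, for extra safety, $Q_{1,2}^{(0)}$, $Q_{1,2}^{(1)}$, $Q_{2,2}^{(0)}$, $Q_{2,2}^{(1)}$), whose intervals are small enough to evaluate by hand; the two exceptional values $\mu(UD,Q_{1,1}^{(0)})=-1$ and $\mu(UD,Q_{1,1}^{(1)})=1$ arise precisely because for $a=1$ all the off-diagonal ranges are empty and, in the case of $Q_{1,1}^{(0)}$, the path $U^{2}D^{2}$ is not even below the maximum.

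The main obstacle is bookkeeping rather than conceptual: one must pin down the correct range of $p$ in each sublist (they genuinely shift by $\pm 1$ according to $k$ and the shape of $Q_{a,b}^{(h)}$), correctly excise the top element, and carry along the $p=1$ boundary values of $\mu$ that interrupt the clean $\pm 2$ pattern on the diagonal. Once the sublists are fixed, the cancellation of the $a$-linear terms can be displayed uniformly across the four cases and the four constants read off.
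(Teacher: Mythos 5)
Your proposal is correct, and I checked the arithmetic in all four cases (for instance, for $Q_{a,a+1}^{(0)}$ the contributions $1-1$ from $UD$ and $U^2D^2$, $-2a+1$ from the diagonal $k=0$ family, $2(a-1)$ from the two off-diagonal $k=0$ families, $2a-3$ from the diagonal $k=1$ family, and $-(a-1)-(a-2)$ from the off-diagonal $k=1$ families do sum to $-1$, giving $\mu=1$). The ingredients you rely on are all legitimate: paths below a two-peak path have at most two peaks, the componentwise run-by-run criterion for containment between two-peak paths is exactly the paper's Lemma on the order $\sqsubseteq$ (and its proof works for general $h$), and the left-right reversal symmetry justifies treating $Q_{i,j}^{(k)}$ and $Q_{j,i}^{(k)}$ on an equal footing. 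However, your route is genuinely different from, and more laborious than, the paper's. The paper does not sum over all nonzero-M\"obius elements; instead, in each case it selects a large element $M$ strictly below the top (e.g.\ $M=Q_{a,a}^{(1)}$ below $Q_{a,a+1}^{(1)}$) and splits the defining recursion as $\mu(UD,Q)=-\sum_{Z\leq M}\mu(UD,Z)-\sum_{Z\leq Q,\ Z\nleq M}\mu(UD,Z)$. The first sum vanishes identically because it is the M\"obius sum over the full interval $[UD,M]$, so almost all of your bookkeeping is absorbed at once; by the vanishing proposition the second sum then reduces to a single term (e.g.\ $Q_{a-1,a}^{(0)}$), whose inductive value immediately gives the answer. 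What your approach buys is uniformity and explicitness --- one sees concretely which elements carry nonzero M\"obius value and how the linear terms cancel --- at the cost of delicate range bookkeeping (the shifts by $\pm1$ in the endpoints, the excision of the top, and the exceptional $p=1$ values), any slip in which would silently break the cancellation. What the paper's trick buys is a two-line computation per case with essentially no bookkeeping. Both are valid inductions resting on the same previous proposition.
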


\begin{proof} All the small cases (namely, when $a\leq 2$) can be easily checked with a simple computation.
When $a>2$, we can proceed by induction on the semilength of the top of the interval.
So, suppose for instance that the top of the interval is $Q_{a,a+1}^{(1)}$ (the first of the above listed cases).
Then, among the paths covered by $Q_{a,a+1}^{(1)}$, there is $Q_{a,a}^{(1)}$, and we can write:
$$
\mu (UD,Q_{a,a+1}^{(1)})=
-\sum_{Z\leq Q_{a,a}^{(1)}}\mu (UD,Z)-\sum_{Z\leq Q_{a,a+1}^{(1)}\atop Z\nleq Q_{a,a}^{(1)}}\mu (UD,Z).
$$

The first sum of the r.h.s is of course 0.
To evaluate the second sum,
we need to find all paths $Z\leq Q_{a,a+1}^{(1)},Z\nleq Q_{a,a}^{(1)}$ such that
the absolute value of the difference between the lengths of any two consecutive runs is at most one
(otherwise, thanks to the previous proposition, the contribution to the above sum is 0).
It is not difficult to realize that, in the case under consideration,
the only path with the required properties is $Q_{a-1,a}^{(0)}$.
By induction, we know that $\mu (UD,Q_{a-1,a}^{(0)})=1$,
and so we can conclude that $\mu (UD,Q_{a,a+1}^{(1)})=-1$, as desired.
The three remaining cases can be dealt with using analogous arguments.
\end{proof}

\section{Again on the M\"obius function and further work}

The combinatorics of the intervals of the Dyck pattern poset is still largely unknown.
We have just provided the first results in this directions,
concerning the enumerative combinatorics of specific intervals (cardinality and covering relations),
as well as the computation of the M\"obius function in a special case.
Concerning this last topic, we can prove some further results,
which give some insight on this important invariant.

First of all, the absolute value of the M\"obius function of the Dyck pattern poset is unbounded.
This is a consequence of the following.

\begin{proposition}
For all $n\geq 2$, $\mu ((UD)^{n-1},(UD)^{n+1})={n\choose 2}$.
\end{proposition}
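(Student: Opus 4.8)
The plan is to compute $\mu((UD)^{n-1},(UD)^{n+1})$ directly from the recursive definition of the M\"obius function, using the combinatorial description of the interval $[(UD)^{n-1},(UD)^{n+1}]$ provided by Lemma~2. First I would identify which Dyck paths $Z$ satisfy $(UD)^{n-1}\le Z\le (UD)^{n+1}$; by Lemma~2 (applied, after a shift, to substrings of $(UD)^{n+1}$), a path $Z$ of semilength $k$ with $m$ ascents lies in this interval precisely when $n-1\le k\le n+1$ and $2k-(n+1)\le m\le k$, together with the additional constraint $(UD)^{n-1}\le Z$, which forces $m\ge n-1$ (since $(UD)^{n-1}$ has $n-1$ peaks and a pattern with fewer ascents than $(UD)^{n-1}$ cannot contain it). Combining these, the relevant paths have semilength $k\in\{n-1,n,n+1\}$ and number of ascents $m$ with $n-1\le m\le k$. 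Concretely: at rank $n-1$ the only path is $(UD)^{n-1}$ itself; at rank $n$ the paths are those of semilength $n$ with $n-1$ or $n$ peaks, i.e. $(UD)^n$ together with the $n-1$ paths of semilength $n$ having exactly $n-1$ peaks; at rank $n+1$ the only path of semilength $n+1$ with $\ge n-1$ peaks that is $\le(UD)^{n+1}$ and $\ge (UD)^{n-1}$, other than $(UD)^{n+1}$ itself, will need to be enumerated — but of course $\mu$ of the top element is what we are computing, so we only need the values of $\mu((UD)^{n-1},Z)$ for $Z$ strictly below $(UD)^{n+1}$.

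Next I would compute $\mu((UD)^{n-1},Z)$ for each such $Z$. For $Z=(UD)^{n-1}$ this is $1$ by definition. For $Z$ of semilength $n$ with exactly $n-1$ peaks, the interval $[(UD)^{n-1},Z]$ consists only of $(UD)^{n-1}$ and $Z$ (a path of semilength $n-1$ strictly between them is impossible, and $(UD)^{n-1}$ is covered by $Z$ since $Z$ has one more box), so $\mu((UD)^{n-1},Z)=-1$; there are $n-1$ such $Z$. For $Z=(UD)^n$, the interval $[(UD)^{n-1},(UD)^n]$ contains $(UD)^{n-1}$, the $n-1$ paths of semilength $n$ with $n-1$ peaks, and $(UD)^n$ itself; hence $\mu((UD)^{n-1},(UD)^n)=-\bigl(1+(n-1)(-1)\bigr)=n-2$. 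Finally, at rank $n+1$ I would check that any path $Z\ne (UD)^{n+1}$ with $(UD)^{n-1}\le Z\le (UD)^{n+1}$ has $\mu((UD)^{n-1},Z)=0$; the natural approach is to argue, as in the earlier M\"obius propositions of the paper, that such a $Z$ has two consecutive runs whose lengths differ by more than $1$ (it must "contract" two peaks of $(UD)^{n+1}$ into a single higher peak, creating a run of length $\ge 2$ adjacent to a run of length $1$ on a path of semilength $n+1$ with few peaks), and then invoke the vanishing results already established, or else directly check that its lower interval is a Boolean-type lattice with Euler characteristic zero. Summing, $\mu((UD)^{n-1},(UD)^{n+1})=-\bigl(1+(n-1)(-1)+(n-2)+0\bigr)=-(n-2-(n-1)+1)=-(0)$ — so I must be careful: recomputing, $1+(n-1)(-1)+(n-2)=1-(n-1)+(n-2)=0$, which would give $0$, not $\binom{n}{2}$, so the rank-$(n+1)$ contributions are emphatically \emph{not} all zero and this is where the real content lies.

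The main obstacle, therefore, is the careful enumeration of the rank-$(n+1)$ paths $Z$ strictly below $(UD)^{n+1}$ with $(UD)^{n-1}\le Z$, together with their M\"obius values, which must sum to $-\binom{n}{2}-\bigl(1-(n-1)+(n-2)\bigr)=-\binom{n}{2}$. I would enumerate these paths explicitly: they are the Dyck paths of semilength $n+1$ obtained from $(UD)^{n+1}$ by merging some peaks, subject to still containing $(UD)^{n-1}$ as a pattern, equivalently the paths of semilength $n+1$ with exactly $n-1$ ascents (those with $n$ or $n+1$ ascents being $(UD)^{n+1}$ and the non-existent $N_{n+1,n+1}$-minus case) lying below $(UD)^{n+1}$; there are $N_{n+1,n-1}$ of them, though not all lie below $(UD)^{n+1}$, so Lemma~2's inequality $m\ge 2k-(n+1)=n-1$ must be checked — it holds with equality, so in fact \emph{every} semilength-$(n+1)$ path with $n-1$ peaks lies below $(UD)^{n+1}$, giving $N_{n+1,n-1}=\binom{n}{2}$ of them. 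For each such $Z$, I would show $\mu((UD)^{n-1},Z)=-1$ by proving that $[(UD)^{n-1},Z]$ is a three-level poset whose middle level (rank $n$) consists of exactly one path — the unique path of semilength $n$ with $n-1$ peaks sitting below $Z$ and above $(UD)^{n-1}$ — whence $\mu((UD)^{n-1},Z)=-(1+(-1))+\text{(correction)}$; the precise structure here is the crux, and I expect it will require a short lemma identifying, for a two-"defect" path $Z$, exactly which single-"defect" path of one lower semilength it covers inside the interval. Granting that each contributes $-1$ and there are $\binom{n}{2}$ of them, while the lower ranks contribute $1-(n-1)+(n-2)=0$ in total, we get $\mu((UD)^{n-1},(UD)^{n+1})=-\bigl(0+(-1)\binom{n}{2}\bigr)=\binom{n}{2}$, as claimed.
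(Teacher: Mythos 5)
Your overall strategy (apply the M\"obius recursion level by level) is fine in principle, but the execution contains several genuine errors, and your final answer comes out right only because two of them cancel. The central misconception is structural: the Dyck pattern poset is graded by semilength, so two \emph{distinct} paths of the same semilength are always incomparable (a proper subword is strictly shorter). Consequently the open interval $\bigl((UD)^{n-1},(UD)^{n+1}\bigr)$ contains \emph{only} paths of semilength $n$: there are no elements of semilength $n+1$ strictly below $(UD)^{n+1}$, so your entire final paragraph -- which places ``the real content'' at rank $n+1$ and sums $\binom{n}{2}$ contributions of $-1$ from paths of semilength $n+1$ -- is built on a false premise. The same misconception corrupts your value $\mu((UD)^{n-1},(UD)^n)=n-2$: the interval $[(UD)^{n-1},(UD)^n]$ is just the two-element chain $\{(UD)^{n-1},(UD)^n\}$ (the semilength-$n$ paths with $n-1$ peaks are \emph{not} below $(UD)^n$), so this value is $-1$. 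Separately, you miscount the middle level: the number of Dyck paths of semilength $n$ with exactly $n-1$ peaks is the Narayana number $N_{n,n-1}=\binom{n}{2}$, not $n-1$, and all of them lie in the interval (having $n-1$ peaks they contain $(UD)^{n-1}$, and by Lemma~2 they are below $(UD)^{n+1}$). With the corrected data the recursion gives $\mu=-\bigl(1-\binom{n}{2}\cdot 1-1\bigr)=\binom{n}{2}$ directly, with no rank-$(n+1)$ terms at all.

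For comparison, the paper's proof is much shorter: since the interval has rank $2$, $\mu$ equals the number of middle elements minus $1$; the middle elements are $(UD)^n$ together with the paths obtained from $(UD)^{n-1}$ by adding one up step and one down step at (possibly equal) peaks, i.e.\ a $2$-multiset from an $(n-1)$-set, giving $\binom{n}{2}$ of them and hence $\mu=\binom{n}{2}$. If you repair the three points above, your computation collapses to essentially that argument.
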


\begin{proof}
Since the interval $I_n=[(UD)^{n-1},(UD)^{n+1}]$ has rank 2,
its M\"obius function is simply given by
the number of its elements covering the minimum (or covered by the maximum) minus 1.
Any path greater than $(UD)^{n-1}$ has at least $n-1$ peaks.
Any path smaller than $(UD)^{n+1}$ has at most $n$ peaks;
moreover, if it has exactly $n$ peaks, it is necessarily the path $(UD)^n$.
In order to count the paths inside $I_n$ having exactly $n-1$ peaks,
we observe that they can be obtained from $(UD)^{n-1}$
by just adding a new up step to one of the peaks and a new down step to one of the peaks as well
(possibly the same one), in such a way that the resulting path is still Dyck.
It is not difficult to realize that this is equivalent to
choosing a multiset having 2 elements out of a set having $n-1$ elements,
which can be done in ${n\choose 2}$ ways.
\end{proof}

We can also determine the maximum value of the M\"obius function on intervals of rank 2.

\begin{proposition}
For all $n\geq 1$, $\mu (U(UD)^{n-1}D,U(UD)^{n+1}D)=n^2$,
and this is the maximum value attained by $\mu$ on intervals of rank 2.
\end{proposition}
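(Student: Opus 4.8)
The plan is to compute $\mu(U(UD)^{n-1}D, U(UD)^{n+1}D)$ directly using the rank-2 description already used in the previous proposition: since the interval $J_n = [U(UD)^{n-1}D, U(UD)^{n+1}D]$ has rank $2$, the M\"obius function equals (number of elements strictly between bottom and top) minus $1$, i.e. the number of paths $R$ with $U(UD)^{n-1}D < R < U(UD)^{n+1}D$, minus $1$. So the main task is to enumerate the ``middle layer'' of $J_n$. First I would record that $U(UD)^{n-1}D$ is the Dyck path $U^2(UD)^{n-2}UD^2$ (equivalently $U^2 D (UD)^{n-2} D$ read appropriately) of semilength $n+1$ with $n$ peaks, and $U(UD)^{n+1}D$ has semilength $n+3$ and $n+2$ peaks; both have height $2$ at the two outer ``shoulders.'' A path $R$ in the open interval has semilength $n+2$, hence either $n$ or $n+1$ peaks, since peak count can drop by at most one at each covering step (as in Proposition~2.1 of \cite{BBFGPW}); and if $R$ has $n$ peaks it must already be $U(UD)^{n-1}D$ enlarged in height only, which forces $R = U(UD)^{n-1}D$ extended — I would check this case contributes exactly one candidate that actually lies below the top.

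The bulk of the argument is the count of $R$ with exactly $n+1$ peaks lying strictly between the two endpoints. The key observation, mirroring the preceding proof, is that any such $R$ is obtained from $U(UD)^{n-1}D$ by a ``single enlargement'' that adds one $U$ and one $D$ while either creating one new peak or keeping the peak count and raising a shoulder. I would set up the enumeration by bookkeeping exactly where the extra $U$ step and the extra $D$ step are inserted into $U(UD)^{n-1}D$, subject to (i) the result being a Dyck path, (ii) the result having $n+1$ peaks, and (iii) the result being $\le U(UD)^{n+1}D$. The elevation by $U\cdots D$ at the outermost level is what distinguishes this family from the $(UD)^{n-1}\le(UD)^{n+1}$ case and is responsible for the extra summands that turn $\binom{n}{2}$ into $n^2$. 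Concretely I expect the count to break into: the $\binom{n}{2}$ multiset-of-size-$2$ choices among the $n$ interior peaks (exactly as in the previous proposition, applied to the interior factor $(UD)^{n-1}$), plus additional contributions coming from putting one or both of the new steps on the outer shoulders; totalling these should give $n^2 - 1$ paths in the open interval, hence $\mu = n^2 - 1 -$ wait, let me be careful: $\mu = (\#\text{middle}) - 1$, so I need the middle layer to have size $n^2+1$. I would verify the small cases $n=1,2,3$ by hand against Figure-type computations to pin down the exact bijective description before committing to the general formula.

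For the maximality claim, I would argue as follows. On any rank-$2$ interval $[P,R]$ the M\"obius value is $c-1$ where $c$ is the number of elements in the middle layer, i.e. the number of paths covered by $R$ and covering $P$. Since $R$ covers at most finitely many paths and, more usefully, since a path $R$ of semilength $m$ covers at most (something linear in the number of runs of $R$) paths, while $P$ of semilength $m-2$ is covered by a controlled number of paths, one gets an a priori bound on $c$ in terms of the number of runs, hence of peaks, of $R$. I would show that among all $R$ with a fixed semilength the configuration $U(UD)^{\cdot}D$ over $UD$ maximizes this middle layer, because spreading the two ``free'' steps over as many independent peaks as possible (here $n$ interior peaks plus the two shoulders behaving like extra slots) maximizes the number of independent ways to insert them; any other top $R$ either has fewer peaks, giving fewer insertion sites, or has peaks of height $>1$ or longer runs, which create coincidences that collapse distinct insertions to the same path (exactly the phenomenon catalogued in Lemma~\ref{delta}). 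The main obstacle, and the step I would spend the most care on, is this last extremal comparison: proving cleanly that no other rank-$2$ interval beats $n^2$, rather than just computing the one family. I would handle it by reducing to: fix the semilength $s=m$ of the top; show $c \le n^2+1$ where $n = s-3$, with equality only for $R = U(UD)^{n+1}D$ and $P = U(UD)^{n-1}D$, by a direct combinatorial injection from the middle layer of an arbitrary such interval into the middle layer of $J_n$.
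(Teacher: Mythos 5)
Your proposal has the right starting point (a rank-$2$ interval has $\mu = c-1$ where $c$ is the size of the middle layer), but as written it contains neither a completed computation nor a workable maximality argument, and it has some arithmetic slips that would need fixing: $U(UD)^{n-1}D$ has semilength $n$ (not $n+1$) and, for $n\geq 2$, has $n-1$ peaks (not $n$); likewise the top has semilength $n+2$. The first half of your plan --- classifying the middle-layer paths by peak count and by where the extra $U$ and $D$ are inserted --- is never carried out: you say you ``expect'' the count to decompose as $\binom{n}{2}$ plus shoulder contributions and that you would check small cases before committing, so there is no proof that the middle layer has exactly $n^2+1$ elements. The insertion bookkeeping is exactly where coincidences (different insertions yielding the same path) make a direct count delicate, so this step cannot be waved through.

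The more serious gap is the maximality claim. Your plan bounds $c$ in terms of the top $R$ and then proposes an injection from the middle layer of an arbitrary rank-$2$ interval into that of $J_n$; you yourself flag this extremal comparison as the main obstacle, and no mechanism for the injection is given. The paper's route is much more direct and bounds from the \emph{bottom}: by Proposition~2.2 of \cite{BFPW}, a path $Q$ of semilength $n$ with factors of semilengths $f_1,\dots,f_k$ is covered by exactly
\begin{equation*}
1+\sum_i f_i^2+\sum_{i<j}f_i f_j \;=\; 1+n^2-\sum_{i<j}f_i f_j \;\leq\; n^2+1,
\end{equation*}
with equality precisely when $Q$ has a single factor. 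Since the middle layer of any rank-$2$ interval $[P,R]$ is contained in the set of covers of $P$, this immediately gives $\mu(P,R)\leq n^2$ for every rank-$2$ interval whose bottom has semilength $n$, settling maximality. It also collapses your first half to a single verification: $U(UD)^{n-1}D$ is elevated, so it has exactly $n^2+1$ covers, and one only needs to check (by inserting a $U$ and a $D$ in all possible positions) that each of these covers lies below $U(UD)^{n+1}D$. Without this covering formula, or some equally sharp substitute, both halves of your argument remain open.
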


\begin{proof}
We start by observing that, for a given path $Q$ of semilength $n$,
the maximum number of paths covering it is $n^2 +1$.
In fact, if $Q$ has $k$ factors having semilengths $f_1 ,f_2 ,\ldots f_k$, respectively,
then, using Proposition 2.2 of \cite{BFPW}, we get that the number of paths covering $Q$ is
$$
1+\sum_{i}f_i ^2 +\sum_{i<j}f_i f_j =1+\left( \sum_{i}f_i \right) ^2 -\sum_{i<j}f_i f_j =1+n^2 -\sum_{i<j}f_i f_j .
$$

The maximum of the above quantity is indeed $n^2 +1$ and is attained when $\sum_{i<j}f_i f_j$ (which corresponds to having only one factor).
To finish the proof, it will be enough to show that, for the interval in the statement of the proposition,
all the paths covering the lower path are also covered by the upper path.
This can be done quite easily, by means of a case-by-case analysis
(just insert in the lower path a $U$ and a $D$ in all possible places and show that the resulting path is still covered by the upper path).
\end{proof}

Notice that the above proof does not show that
the interval in the statement of the proposition is the unique interval of rank 2 attaining the maximum of the M\"obius function.

Some computations also suggest the following conjecture:

\begin{conjecture}
The maximum absolute value of the M\"obius function on intervals of rank 3 is $(2n+1)\cdot n^2$, attained by the interval
$[U(UD)^{n-1}D,U(UD)^{n+2}D]$.
\end{conjecture}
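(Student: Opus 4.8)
The conjecture concerns intervals of rank $3$, so my plan is to first understand which interval of rank $3$ is the natural candidate, and then to compute the M\"obius function of that specific interval by peeling off one rank at a time. The lower path $L=U(UD)^{n-1}D$ has semilength $n+1$ and exactly $n$ peaks, all at height $2$ except possibly the structure forced by the outer $U\cdots D$; the upper path $V=U(UD)^{n+2}D$ has semilength $n+3$. Any path $Z$ with $L\le Z\le V$ satisfies $n\le\operatorname{asc}(Z)\le n+2$ (since adding steps can only merge or split peaks by a bounded amount), and the rank of $Z$ lies strictly between $n+1$ and $n+3$, so $Z$ has semilength $n+2$ and hence rank-distance $1$ from each endpoint, or one of the endpoints themselves. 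Thus $I=[L,V]$ really is an interval of rank $3$ once we check the intermediate layers; I would first verify that the Hasse diagram of $I$ has exactly three nontrivial layers and describe the elements of each layer explicitly as paths obtained from $L$ by inserting $U$'s and $D$'s into the "slots" of $L$.

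**Key steps.** First I would set up a bijective description of the elements of $I$ at each rank: by the same reasoning as in Proposition 10 (the $\mu((UD)^{n-1},(UD)^{n+1})=\binom n2$ computation) and Proposition 11 (the $\mu(U(UD)^{n-1}D,U(UD)^{n+1}D)=n^2$ computation), inserting one $U$ and one $D$ into $L$ so that the result is still Dyck and still $\le V$ amounts to a constrained choice among the $n$ "factors" of $L$ (here $L$ has a single factor of semilength $n+1$, which is exactly why the count $n^2$, rather than something smaller, appeared at rank $2$). I would then count, for the rank-$(n+2)$ layer of $I$, the number of such single-$U$/single-$D$ insertions, and for the covering layer below $V$, dually the number of ways to delete one $U$ and one $D$ from $V$ landing in $I$. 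Second, I would apply the rank-selection / crosscut-type bookkeeping: since $I$ has rank $3$, $\mu(L,V)=-1+(\#\text{atoms})-(\#\text{rank-}2\text{ elements})+\cdots$ is not immediate, so instead I would compute $\mu(L,Z)$ for each $Z$ of semilength $n+2$ in $I$ first, then use $\mu(L,V)=-\sum_{L\le Z<V}\mu(L,Z)$. For the atoms $Z$ (the paths covering $L$) we have $\mu(L,Z)=-1$; the subtle part is the rank-$2$ elements $W$ of $I$, where $\mu(L,W)=-1+(\#\text{atoms of }I\text{ below }W)$, and the atoms of $I$ below a given $W$ must be counted carefully. Assembling these via the defining recursion should yield a polynomial in $n$; the conjecture predicts $(2n+1)n^2$ in absolute value, i.e. $\mu(L,V)=(2n+1)n^2$ (the sign should come out positive, matching $n^2$ at rank $2$ and the alternation pattern seen in the rank-$2$ computations).

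**Main obstacle.** The hard part will be the middle layer: unlike rank $2$, where one only needs the number of atoms, here one must know, for each rank-$2$ element $W$ of $I$, exactly how many atoms of $I$ lie below it, and these local counts are not uniform — a path $W$ obtained from $L$ by a "spread-out" insertion covers more atoms than one obtained by a "concentrated" insertion. So the real work is a careful case analysis of the insertion patterns that produce rank-$2$ elements of $I$, grouping them by the multiset of slots used, computing $\mu(L,W)$ in each group, and summing. I expect that after this case analysis the sum telescopes or collapses to the clean value $(2n+1)n^2$, and that the final, harder half of the statement — that this is the \emph{maximum} over all rank-$3$ intervals — would require an argument paralleling Proposition 11: one shows that for an arbitrary bottom path $Q$ of semilength $n$ with factor semilengths $f_1,\dots,f_k$, the relevant counts are maximized when $k=1$, and then that pushing the interval to be $[U(UD)^{n-1}D, U(UD)^{n+2}D]$ is forced; this second half is exactly why the statement is still only a conjecture, since controlling $\mu$ over \emph{all} rank-$3$ intervals (not just those with a single-factor bottom) does not obviously reduce to the single-factor case.
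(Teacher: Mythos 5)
The statement you are addressing is one of the paper's \emph{conjectures}: the authors do not prove it, they only report that it is suggested by computations. So there is no proof in the paper to compare yours against, and the only relevant question is whether your proposal actually establishes the claim. It does not: it is a plan rather than a proof. You correctly identify the two computations that would be needed (for each rank-$2$ element $W$ of $I=[L,V]$ the number of atoms of $I$ below $W$, and then the assembly $\mu(L,V)=-\sum_{L\le Z<V}\mu(L,Z)$), but you never carry out the case analysis you yourself call ``the real work,'' and you explicitly concede that the maximality half --- that $(2n+1)n^2$ bounds $|\mu|$ over \emph{all} rank-$3$ intervals --- is out of reach. Nothing beyond what the paper already states is established.

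There are also concrete errors in your setup. The path $L=U(UD)^{n-1}D$ has semilength $n$, not $n+1$ (it contains $1+(n-1)=n$ up steps), and it has $n-1$ peaks for $n\ge 2$, not $n$. Consequently $[L,V]$ with $V=U(UD)^{n+2}D$ spans semilengths $n$ through $n+3$ and has \emph{two} intermediate layers, at semilengths $n+1$ and $n+2$; your claim that every intermediate $Z$ has semilength $n+2$ and is at rank-distance $1$ from each endpoint is false, and it contradicts your own later (correct) distinction between atoms and rank-$2$ elements. Finally, your predicted sign conflicts with the paper's other conjecture that $\mu$ is $\le 0$ on intervals of odd rank: since this interval has rank $3$, one expects $\mu(L,V)=-(2n+1)n^2$, not $+(2n+1)n^2$ (which is consistent with the conjecture as stated, since it speaks of the maximum \emph{absolute} value). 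If you want to make progress here, the honest next step is to actually perform, for small $n$, the enumeration of the two intermediate layers and of the atom counts $a_W$ below each rank-$2$ element, verify the value $-(2n+1)n^2$, and only then look for the general pattern; the maximality assertion would additionally require a uniform upper bound on $|\mu|$ for arbitrary rank-$3$ intervals, for which no mechanism is currently visible in the paper or in your proposal.
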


Another intriguing conjecture, again supported by computational evidence, is the following:

\begin{conjecture}
The M\"obius function is alternating, meaning that it is $\geq 0$ on intervals of even rank and $\leq 0$ on intervals of odd rank.
\end{conjecture}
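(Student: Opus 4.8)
The plan is to prove, in one stroke, a statement much stronger than the conjecture: that every open interval of the Dyck pattern poset has the homotopy type of a wedge of spheres of the "expected" dimension, and then to read off the sign of $\mu$ from this. The first step is to translate the recursive definition of $\mu$ into topology. For $x\le y$ with $[x,y]$ of rank $\ell=\mathrm{rk}(y)-\mathrm{rk}(x)$, P.\ Hall's theorem gives $\mu(x,y)=\widetilde\chi\bigl(C(x,y)\bigr)$, the reduced Euler characteristic of the order complex $C(x,y)$ of the open interval $(x,y)$; since a chain in $(x,y)$ has at most $\ell-1$ elements, $\dim C(x,y)=\ell-2$. If one can show that $C(x,y)$ is homotopy equivalent to a wedge of $(\ell-2)$-spheres for every such interval (equivalently $\widetilde H_i(C(x,y))=0$ for $i\neq\ell-2$), then $\mu(x,y)=(-1)^{\ell-2}\operatorname{rank}\widetilde H_{\ell-2}=(-1)^{\ell}\beta_{\ell-2}$ with $\beta_{\ell-2}\ge 0$, so $\operatorname{sign}\mu(x,y)=(-1)^{\ell}$, which is exactly the claimed alternation. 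It is reassuring that all the M\"obius values computed above, namely $\binom n2$, $n^2$, $\pm 1$ and $\pm 2$, are consistent with this (check the parity of the rank in each case).

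To establish the wedge-of-spheres property I would attempt to equip the Hasse diagram of the Dyck pattern poset with an EL-labeling. The poset is graded by semilength, so a covering $P\lessdot Q$ raises the semilength by one, and, using the description of covering relations from \cite{BFPW,BBFGPW}, each such covering is either the insertion of a new peak $UD$ (at one of the valleys of $P$ or at an end) or the enlargement $U^cD^d\mapsto U^{c+1}D^{c+1}$ of one existing peak. I would label a covering by the position, read from left to right, at which the modification takes place, together with a secondary mark distinguishing "new peak" from "enlarged peak", and totally order the label alphabet so that "leftmost first, and new-peak before enlargement" is smallest. One then has to verify Bj\"orner's conditions: in every interval $[P,Q]$ there is a unique maximal chain whose sequence of labels strictly increases, and that chain is lexicographically first. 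If this can be done, Bj\"orner's theorem yields shellability of every open interval, hence the wedge-of-spheres conclusion, hence the conjecture.

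The hard part will be precisely the verification of the unique-increasing-chain condition. In pattern posets a single cover can be "produced" in many combinatorially distinct ways, and two successive modifications of a Dyck word can interfere: enlarging a peak changes which later insertions keep the word Dyck, and a newly created valley may or may not be an admissible insertion site afterwards. A naive labeling is therefore unlikely to be an EL-labeling without a carefully chosen order on the alphabet and a detailed analysis of how occurrences of one Dyck pattern sit inside another. If a global EL-labeling resists, the natural fallbacks are to prove CL-shellability of every interval via a recursive atom ordering, or to construct, by discrete Morse theory, an acyclic matching on the face poset of $C(x,y)$ whose critical cells all have dimension $\ell-2$. A more elementary fallback is induction on $\ell$ directly from $\mu(x,y)=-\sum_{x\le z<y}\mu(x,z)$: group the summands by $\mathrm{rk}(z)$, substitute the inductive sign $(-1)^{\mathrm{rk}(z)-\mathrm{rk}(x)}$, and try to show that the resulting alternating sum of the rank-sizes $s_0^{(k)}([x,y])$ has sign $(-1)^{\ell}$; this, however, requires enumerating general Dyck pattern intervals beyond the special maxima treated in this paper, and taming that alternating sum is essentially as hard as the conjecture itself. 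In all of these routes I expect the combinatorial bookkeeping of nested occurrences of Dyck patterns to be the genuine obstacle.
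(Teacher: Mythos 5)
This statement is a \emph{conjecture} in the paper: the authors offer no proof, only computational evidence and the remark that the conjectured alternation ``suggests that the intervals of the Dyck pattern poset might all be shellable.'' Your proposal runs that implication in the productive direction, and the topological reduction is sound: by Hall's theorem $\mu(x,y)=\widetilde\chi(C(x,y))$, and if every open interval is homotopy equivalent to a wedge of top-dimensional spheres then $\mu(x,y)=(-1)^{\ell}\beta_{\ell-2}$ with $\beta_{\ell-2}\ge 0$, which gives the alternation. But nothing after that reduction is actually proved. The EL-labeling is only a candidate, its defining property (a unique, lexicographically first, strictly increasing maximal chain in every interval) is not verified for a single nontrivial interval, and you yourself concede that this verification is ``the hard part'' and may fail; the listed fallbacks (CL-shellability, discrete Morse theory, direct induction on rank) are named but not executed. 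A strategy with an admitted unproved core is not a proof, so the conjecture remains open under your proposal exactly as it does in the paper.

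There is also a concrete error in the one piece of combinatorics you do commit to. The coverings $P\lessdot Q$ are \emph{not} exhausted by ``insert a new peak $UD$ at a valley or an end'' and ``enlarge one existing peak $U^cD^d\mapsto U^{c+1}D^{d+1}$'': a cover of $P$ is any Dyck path obtained by inserting one $U$ and one $D$ into $P$ at (possibly non-adjacent) positions, which is why the paper's count of covers of a path of semilength $n$ is $1+n^2-\sum_{i<j}f_if_j$ rather than something linear in the number of valleys and peaks. For instance $UUDD\lessdot UUDUDD$, obtained by splitting the peak, fits neither of your two cases. Consequently the labeling you describe does not even assign a label to every edge of the Hasse diagram, so the EL-labeling programme cannot get off the ground as stated; any serious attempt must start from the full description of coverings in Proposition 2.2 of \cite{BFPW}.
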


The above conjecture suggests that the intervals of the Dyck pattern poset might all be shellable.

\bigskip

Concerning the enumerative combinatorics of the intervals in the Dyck pattern poset, we still have to understand what happens in most of the cases.
Counting elements and covering relations in the case of initial intervals in which the maxiumum has exactly three peaks could be a good starting point.
Moreover,
there are some asymptotic issues that seem to be rather interesting.
Indeed, some computations suggest that the maximum size of an interval of fixed rank whose minimum has semilength $n$ is polynomial in $n$
(when $n$ tends to infinity).
Finally, we remark that this kind of investigations,
which has already been pursued for many combinatorially interesting posets (as we recalled in the Introduction),
seems to still be lacking for the permutation pattern poset.

\end{document}